\newtheorem{theorem}{Theorem}
\newtheorem{proposition}[theorem]{Proposition}
\newtheorem{lemma}[theorem]{Lemma}
\newtheorem{question}{Question}
\newtheorem{corollary}[theorem]{Corollary}
\theoremstyle{definition}
\newtheorem{definition}[theorem]{Definition}
\newtheorem{remark}[theorem]{Remark}
\newtheorem{fact}[theorem]{Fact}
\newtheorem*{caveat}{Caveat}
\numberwithin{theorem}{section}
\numberwithin{question}{section}
\newcommand{\forces}{\Vdash}
\newcommand{\compat}{\parallel}
\DeclareMathOperator{\Ult}{\mathrm{Ult}}
\DeclareMathOperator{\cof}{\mathrm{cof}}
\DeclareMathOperator{\cf}{\mathrm{cf}}
\newcommand{\seq}[1]{\langle #1 \rangle}
\title[Destroying Saturation while Preserving Presaturation at an Inaccessible]{Destroying Saturation while Preserving Presaturation at an Inaccessible: An Iterated Forcing Argument}
\author{Noah Schoem}
\date{}
\begin{document}
\maketitle

\begin{abstract}
We prove that a large class of presaturated ideals at inaccessible cardinals can be de-saturated while preserving their presaturation, answering both a question of Foreman and of Cox and Eskew. We do so by iterating a generalized version of Baumgartner and Taylor's forcing to add a club with finite conditions along an inaccessible cardinal, and invoking Foreman's Duality Theorem.

\end{abstract}

\keywords{ideals, saturation, presaturation, iterated forcing, Foreman's Duality Theorem}

\subjclass{Primary 03E05, 03E35, 03E55, 03E65}

\section{Introduction}
\label{intro}
It is a classical result of Solovay \cite{solovay_split} that the nonstationary ideal $NS_\kappa$ always has a $\kappa$-sized disjoint family of nonstationary sets; that is, in modern parlance, we say that $NS_\kappa$ is not $\kappa$-saturated.
One can argue Solovay's theorem using \emph{generic ultrapowers}. Suppose for sake of contradiction that$NS_\kappa$ is $\kappa$-saturated; then a $V$-generic filter $G$ for $(\mathcal{P}(\kappa)/NS_\kappa \setminus [\emptyset],\supseteq_{NS_\kappa})$ is a $V$-$\kappa$-complete $V$-normal $V$-ultrafilter with wellfounded ultrapower $\Ult(V,G)$.
Ultrapower arguments then yield a stationary set $S\subseteq\kappa$ in $V$ that is no longer stationary in $\Ult(V,G)$, hence is nonstationary in $V[G]$. 
But since $NS_\kappa$ was assumed to be $\kappa$-saturated, our forcing has the $\kappa$-chain condition and hence $S$ must be stationary in $V[G]$; this is a contradiction.

Solovay then asked whether $NS_\kappa$ is $\kappa^+$-saturated, and subsequent work by Gitik and Shelah in \cite{gitik_shelah_less_sat} showed that $NS_\kappa$ is not $\kappa^+$-saturated, except when $\kappa=\omega_1$.
Here, it is consistent (e.g. in the presence of Martin's Maximum, c.f. \cite{foreman_magidor_shelah_martin_max}) for $NS_{\omega_1}$ to be $\omega_2$-saturated.
Likewise, the nonstationary ideal on $\mathcal{P}_\kappa(\lambda)$ (for $\lambda\geq\kappa$) is known not to be $\kappa^+$-saturated unless $\kappa=\lambda=\omega_1$. 
This was due to Burke, Foreman, Gitik, Magidor, Matsubara, and Shelah; a summary and the proof of the case $\kappa=\lambda=\omega_1$ can be found in \cite{foreman_magidor_ns_ns}.

However, there are still useful arguments that can be written \emph{just} assuming that  $\mathcal{P}(\kappa) / NS_\kappa$ is \emph{precipitous}, i.e. induces a wellfounded ultrapower $\Ult(V,G)$.
For instance, this simplifies Silver's original argument in \cite{silver_sch} that if $\mathrm{SCH}$ fails at a singular cardinal, then the first singular cardinal at which $\mathrm{SCH}$ fails must have countable cofinality.

One can also ask whether there is \emph{any} ideal on $\kappa$ that is $\kappa$-saturated, $\kappa^+$-saturated, or even just precipitous. Results here are well-established and comprehensive.
The existence of exactly $\kappa$-saturated or $\kappa^+$-saturated ideals on inaccessible $\kappa$ are equiconsistent with a measurable cardinal.
This was first shown by Kunen and Paris in \cite{kunen_paris_measurables}, with weakly compact being compatible with $\kappa^+$-saturation (and it was known since early work of L\'evy and Silver that a $\kappa$-saturated ideal on $\kappa$ prevents $\kappa$ from being weakly compact).
Subsequently, Boos showed that an exactly $\kappa^+$-saturated ideal on $\kappa$ can exist at a non-weakly compact $\kappa$ in \cite{boos_efface_mahlo}.
As for successor cardinals, the consistency results are more striking. Certain arguments show that if $\kappa$ carries a $\kappa$-saturated ideal, then $\kappa$ must be weakly Mahlo, and hence not a successor.
Proofs can be found in \cite{baumgartner_taylor_wagon_ns_lcs} and \cite{ulam_dissertation}.
However, $\kappa^+$-saturated ideals \emph{can} occur at successor $\kappa$; the known ways to achieve this come from forcing over models with huge cardinals as done by Kunen in \cite{kunen_saturated_ideals} and Laver in \cite{laver_sat}.

Ideals on arbitrary sets $Z$ project downwards to subsets $Z'$ of $Z$, and it is natural to ask whether regularity of the inverse embedding implies nice saturation properties of the projected ideal:

\begin{question}[\cite{foreman_ideals_gen_ees}, Question 13 of Foreman]\label{foremanq13}
Let $n\in\omega$ and let $\mathcal{J}$ be an ideal on $Z\subseteq \mathcal{P}(\kappa^{+(n+1)})$.
Let $\mathcal{I}$ be the projection of $\mathcal{J}$ from $Z$ to some $Z'\subseteq \mathcal{P}(\kappa^{+n})$.
Suppose that the canonical homomorphism from $\mathcal{P}(Z')/\mathcal{I}$ to $\mathcal{P}(Z)/\mathcal{J}$ is a regular embedding.
Is $\mathcal{I}$ $\kappa^{+(n+1)}$-saturated?
\end{question}

The answer is no; prior work by Cox and Zeman in \cite{cox_zeman_antichain_catch} established counterexamples.
Later work by Cox and Eskew provided a template for finding counterexamples as follows.
We observe that $\mathcal{I}$ a $\kappa^{+n+1}$-saturated ideal on $\kappa^{+n}$ induces a wellfounded generic ultrapower and preserves $\kappa^{+n+1}$.
So we will say that an ideal $\mathcal{I}$ on $\kappa^{+n}$ is \emph{$\kappa^{+n+1}$-presaturated} if $\mathcal{I}$ induces a wellfounded generic ultrapower and preserves $\kappa^{+n+1}$.
Our template is then:

\begin{fact}[\cite{cox_eskew_kill_sat_save_presat}, corollary of Theorem 1.2]
Any $\kappa^{+n+1}$-presaturated, non-$\kappa^{+n+1}$ saturated ideal on $\kappa^{+n}$ provides a counterexample to Question \ref{foremanq13}.
\end{fact}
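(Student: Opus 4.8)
The plan is to manufacture, from the given presaturated ideal $\mathcal I$ on $\kappa^{+n}$, an ideal $\mathcal J$ on a suitable $Z\subseteq\mathcal P(\kappa^{+(n+1)})$ whose projection down to some $Z'\subseteq\mathcal P(\kappa^{+n})$ recovers $\mathcal I$ and for which the canonical homomorphism $\mathcal P(Z')/\mathcal I\to\mathcal P(Z)/\mathcal J$ is a regular embedding. Once that is in hand, the data $(\mathcal J,\mathcal I,Z,Z')$ satisfies every hypothesis of Question~\ref{foremanq13}, while its conclusion fails because $\mathcal I$ is assumed \emph{not} to be $\kappa^{+(n+1)}$-saturated; hence $\mathcal I$ is a counterexample. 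So the whole content of the Fact is the construction of $\mathcal J$, and this is precisely what Theorem~1.2 of \cite{cox_eskew_kill_sat_save_presat} extracts from Foreman's Duality Theorem.

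Concretely, I would proceed as follows. Let $\mathbb P=\mathcal P(\kappa^{+n})/\mathcal I$ and let $\dot j\colon V\to\dot M$ be the $\mathbb P$-name for the generic ultrapower, of critical point $\kappa^{+n}$. Since $\mathcal I$ is presaturated, $\mathbb P$ forces $\dot M$ to be wellfounded and $\kappa^{+(n+1)}$ to be preserved, and a standard analysis of presaturated ideals (see~\cite{foreman_ideals_gen_ees}) then gives $\dot j(\kappa^{+n})=\kappa^{+(n+1)}$. I would next choose a $\mathbb P$-name $\dot{\mathbb Q}$ for a poset lying in $\dot M$ and represented by a function $\langle\mathbb Q_z:z\in Z'\rangle\in V$ --- morally, a L\'evy collapse definable in $\dot M$ whose effect is to shrink $\dot j(\kappa^{+(n+1)})$ so that forcing with $\mathbb P*\dot{\mathbb Q}$ exposes a set $s$ with $\dot j[\kappa^{+(n+1)}]\subseteq s\subseteq\dot j(\kappa^{+(n+1)})$ which $\dot M$ believes has cardinality $\dot j(\kappa^{+n})$, together with a canonical way of coding each pair $(z,q)$ (with $z\in Z'$ and $q\in\mathbb Q_z$) by an element of $\mathcal P(\kappa^{+(n+1)})$. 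Foreman's Duality Theorem then yields an ideal $\mathcal J$ on the set $Z$ of these codes, with $\mathcal P(Z)/\mathcal J$ forcing-equivalent to $\mathbb P*\dot{\mathbb Q}$, such that the projection $(z,q)\mapsto z$ carries $\mathcal J$ to $\mathcal I$ and the induced canonical homomorphism $\mathcal P(Z')/\mathcal I\to\mathcal P(Z)/\mathcal J$ is identified with the two-step-iteration embedding $\mathbb P\hookrightarrow\mathbb P*\dot{\mathbb Q}$.

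With $\mathcal J$ in hand the rest is immediate: the canonical homomorphism is the standard complete embedding of $\mathbb P$ as the first coordinate of the two-step iteration $\mathbb P*\dot{\mathbb Q}$, so it is regular, and thus all of the hypotheses of Question~\ref{foremanq13} hold for $(\mathcal J,\mathcal I,Z,Z')$. Since $\mathcal I$ is not $\kappa^{+(n+1)}$-saturated, the answer for this instance is negative, so $\mathcal I$ is a counterexample, as claimed.

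I expect the only real work to be the middle step: one must choose $\dot{\mathbb Q}$ and the coding of the pairs $(z,q)$ carefully enough that the ideal $\mathcal J$ produced by the Duality Theorem genuinely lives on $\mathcal P(\kappa^{+(n+1)})$ --- concentrating on subsets of $\kappa^{+(n+1)}$ of the cardinality and order type needed for its generic embedding to have critical point $\kappa^{+n}$ with seed coding $\dot j[\kappa^{+(n+1)}]$ --- rather than on some unrelated set. This is exactly where presaturation of $\mathcal I$, rather than mere precipitousness, is needed, since it is agreement such as $\dot j(\kappa^{+n})=\kappa^{+(n+1)}$ between $\dot M$ and $V$ near $\kappa^{+(n+1)}$ that makes the cardinal arithmetic of the coding come out right. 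Once the Duality Theorem applies, the regularity of the canonical map and the failure of $\kappa^{+(n+1)}$-saturation are both free.
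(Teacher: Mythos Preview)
The paper does not prove this Fact; it is stated in the introduction with a citation to \cite{cox_eskew_kill_sat_save_presat} and used as a black box, so there is no in-paper argument to compare your proposal against. Your sketch is in the right spirit --- build $\mathcal J$ upstairs via a two-step iteration and invoke Foreman's Duality so that the canonical map becomes the regular embedding $\mathbb P\hookrightarrow\mathbb P*\dot{\mathbb Q}$ --- and this is indeed how Cox and Eskew proceed.

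One correction, however: your assertion that presaturation alone gives $\dot j(\kappa^{+n})=\kappa^{+(n+1)}$ is not right. Presaturation guarantees that $\kappa^{+(n+1)}$ is preserved in $V[G]$, but it does not force $\dot M$ to compute $(\kappa^{+n})^{+}$ correctly, nor does it bound $\dot j(\kappa^{+n})$ at $\kappa^{+(n+1)}$. Indeed, the present paper explicitly notes (see the remarks before the proof of Proposition~\ref{ground-killsat}) that for the ideals it studies on inaccessible $\kappa$, one has $|j_I(\kappa)|=2^\kappa$ and $j_I(\kappa)>\kappa^{+}$. The equality $\dot j(\lambda)=\lambda^{+}$ is a feature of saturated ideals on \emph{successor} cardinals, not of presaturated ideals in general. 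This does not wreck the construction --- one only needs $\dot j[\kappa^{+(n+1)}]$ to be coded by a seed in $\mathcal P(\kappa^{+(n+1)})$, and the collapse $\dot{\mathbb Q}$ can be chosen to arrange that --- but you should not invoke $\dot j(\kappa^{+n})=\kappa^{+(n+1)}$ as the mechanism.
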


To construct such ideals for successor cardinals $\kappa=\mu^+$ (with $\mu$ regular and mild assumptions on cardinal arithmetic),
Cox and Eskew in \cite{cox_eskew_kill_sat_save_presat} generalized a forcing of Baumgartner and Taylor in \cite{baumgartner_taylor_sat_gen_ees_two} to add a club subset $C$ of $\kappa$ with $<\mu$-conditions.
(Baumgartner and Taylor's original version in \cite{baumgartner_taylor_sat_gen_ees_two} was for $\mu=\omega$.)
This $C$ prevented $\kappa^+$-saturated ideals on $\kappa$ from existing in the generic extension.
At the same time, their forcing was \emph{strongly proper}; with use of Foreman's Duality Theorem \cite{foreman_ideals_gen_ees}, a powerful tool for computing properties of ideals in generic extensions, Cox and Eskew were then able to argue that their forcing preserved the $\kappa^+$-presaturation of a large class of ideals (including $\kappa^+$-saturated ideals) in the generic extension.

This results in the following:
\begin{fact}
Let $V$ be a universe admitting $\kappa$-complete, $\kappa^+$-saturated ideals at $\kappa$ the successor of a regular cardinal. 
Then there is a forcing $\mathbb{P}$ such that $V^\mathbb{P}$ admits no $\kappa^+$-saturated ideals, however if $I$ is a $\kappa^+$-saturated ideal on $\kappa$ in $V$, then in $V^\mathbb{P}$ there is an $S\in \overline{I}^+$\footnote{where $\overline{I}$ is the ideal induced in $V^\mathbb{P}$ by $I$ and is defined by $\overline{I}=\{A\in \mathcal{P}^{V^\mathbb{P}}(\kappa) \mid \exists N\in I \ A\subseteq N\}$.}
such that $\overline{I}\upharpoonright S$ is $\kappa^+$-presaturated.
\end{fact}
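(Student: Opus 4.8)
\emph{The forcing.} The plan is to isolate a single poset $\mathbb{P}$ with two properties --- (a) $\mathbb{P}$ forces that no ideal on $\kappa$ is $\kappa^+$-saturated, and (b) $\mathbb{P}$ is $\mu$-strongly proper --- and then to read off the presaturation of $\overline{I}\upharpoonright S$ from (b) by feeding the resulting factorization into Foreman's Duality Theorem. Write $\kappa=\mu^+$ with $\mu$ regular; under the ambient cardinal-arithmetic hypotheses, let $\mathbb{P}$ be the generalized Baumgartner--Taylor poset of Cox--Eskew that adds a club $C\subseteq\kappa$: a condition has a ``club part'', a closed bounded subset of $\kappa$ of size ${<}\mu$, together with the side-condition apparatus (a short $\in$-chain of small elementary submodels, coherently respected by the club part) that renders the poset strongly proper, and the order end-extends the club part and enlarges the side conditions. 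The relevant bookkeeping: $\mathbb{P}$ is ${<}\mu$-closed, hence adds no new ${<}\mu$-sequences and preserves all cardinals $\leq\mu$; $\mathbb{P}$ is $\mu$-strongly proper; and, granting the cardinal arithmetic, $\mathbb{P}$ preserves all cardinals $\leq\kappa^+$. In $V^{\mathbb{P}}$ the generic set $C=\bigcup\{c:(c,\mathcal{M})\in G\}$ is then a genuine club in $\kappa$.

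\emph{Destroying saturation.} Suppose toward a contradiction that $\mathcal{J}$ is a $\kappa^+$-saturated ideal on $\kappa$ in $V^{\mathbb{P}}$, let $H$ be generic for $\mathcal{P}(\kappa)/\mathcal{J}$, and let $j\colon V^{\mathbb{P}}\to M\subseteq V^{\mathbb{P}}[H]$ be the induced generic ultrapower, so that $\mathrm{crit}(j)=\kappa$ and $M^{\kappa}\cap V^{\mathbb{P}}[H]\subseteq M$. The poset $\mathbb{P}$ is built so that its generic club $C$ encodes a combinatorial feature of $\kappa$ that cannot consistently reflect into such an $M$: since every condition has size ${<}\mu<\kappa=\mathrm{crit}(j)$ it is, modulo the side conditions, fixed by $j$, so $G$ is a filter on a suborder of $j(\mathbb{P})$ meeting every dense subset of $\mathbb{P}$ lying in $V^{\mathbb{P}}$; one then runs the Baumgartner--Taylor argument --- transcribed from the original finite-condition version to ${<}\mu$-sized conditions --- to produce a dense set of conditions in $j(\mathbb{P})$ forcing $j(C)$ to contain, or to avoid, points along which $C$ was forced to do the opposite, which is the contradiction. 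I expect this step to be essentially a reproduction of \cite{baumgartner_taylor_sat_gen_ees_two}, the only genuinely new check being that enlarging conditions from finite to size ${<}\mu$ does not disturb the cofinality-$\mu$ structure of $\mu^+$; since the density arguments involved are local to $\kappa$, this should go through without incident.

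\emph{Preserving presaturation.} Fix a $\kappa$-complete, $\kappa^+$-saturated ideal $I$ on $\kappa$ in $V$, put $\mathbb{B}=\mathcal{P}(\kappa)/I$, and let $j\colon V\to N\subseteq V^{\mathbb{B}}$ be its generic ultrapower; thus $\mathrm{crit}(j)=\kappa<j(\kappa)$, $N^{\kappa}\cap V^{\mathbb{B}}\subseteq N$, and both $\kappa$ and $\kappa^+$ are preserved. Since $\mu<\kappa$ we have $j(\mu)=\mu$, so in $N$ the poset $j(\mathbb{P})$ is the same construction performed at $j(\kappa)$, and, modulo the side-condition bookkeeping, $\mathbb{P}$ embeds as a suborder of $j(\mathbb{P})$ compatibly with $j$. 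The key move is to invoke $\mu$-strong properness, which --- below an appropriate strong master condition --- furnishes in $N$ a projection of $j(\mathbb{P})$ onto $\mathbb{P}$ whose quotient $\dot{\mathbb{R}}=j(\mathbb{P})/\mathbb{P}$, the forcing continuing the generic club up through $j(\kappa)$, is forced by $\mathbb{P}$ to be closed enough that it adds no bounded subsets of $\kappa$ and, in particular, preserves $\kappa^+$. With the factorization $j(\mathbb{P})\cong\mathbb{P}*\dot{\mathbb{R}}$ (with $\mathbb{P}$ regularly embedded in the first coordinate and $j$ the identity on it) in hand, Foreman's Duality Theorem \cite{foreman_ideals_gen_ees} applies and delivers, in $V^{\mathbb{P}}$, a positive set $S\in\overline{I}^+$ together with a dense embedding of $\mathcal{P}(\kappa)/(\overline{I}\upharpoonright S)$ into $\mathbb{B}*\dot{\mathbb{R}}$ modulo the image of $G$; under this embedding the generic ultrapower of $\overline{I}\upharpoonright S$ is $N[\hat{G}]$ for a suitable $j(\mathbb{P})$-generic $\hat{G}$ over $N$, and the associated generic embedding is the canonical lift $\hat{\jmath}\colon V^{\mathbb{P}}\to N[\hat{G}]$ of $j$. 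From this the conclusion reads off at once: $\hat{\jmath}$ is well-founded because $N[\hat{G}]$ is a set-forcing extension of the well-founded model $N$, and $\mathcal{P}(\kappa)/(\overline{I}\upharpoonright S)$ preserves $\kappa^+$ because $\mathbb{B}$ is $\kappa^+$-c.c.\ and $\dot{\mathbb{R}}$ is forced to be $\kappa^+$-preserving. Hence $\overline{I}\upharpoonright S$ is $\kappa^+$-presaturated in $V^{\mathbb{P}}$.

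\emph{Where the difficulty lies.} The one substantive step is the absorption/factorization just used. One must verify that the generalized Baumgartner--Taylor poset really is $\mu$-strongly proper at $\kappa=\mu^+$, and strongly enough that the quotient $j(\mathbb{P})/\mathbb{P}$ not merely keeps $\hat{\jmath}$ well-founded --- which is comparatively cheap --- but in fact preserves $\kappa^+$: this is exactly the gap between precipitousness and presaturation, and it is here that the closure of the tail forcing is essential. One must also check that the hypotheses of Foreman's Duality Theorem hold on the nose in the present setup, which includes pinning down the positive set $S$ on which the ideal carried by the lifted embedding agrees with $\overline{I}\upharpoonright S$. The remaining ingredients --- the genericity and clubness of $C$, the ${<}\mu$-closure of $\mathbb{P}$, and the elementary cardinal arithmetic --- are routine.
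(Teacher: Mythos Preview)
Your overall architecture---generalized Baumgartner--Taylor club forcing, de-saturation via the generic club, presaturation via strong properness plus Foreman's Duality---is exactly the Cox--Eskew strategy the paper is summarizing here (the paper states this as a Fact and does not give its own proof beyond the preceding paragraph). So the high-level plan is correct and matches the paper.

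Two corrections of detail. First, the poset you describe is not the one actually used. The Cox--Eskew forcing (reproduced in this paper as Definition~\ref{defpmu}) has no model side conditions: a condition is a pair $(s,f)$ with $s\in[\mu^+\setminus\mu]^{<\mu}$ an approximation to the club and $f:s\to[\mu^+\setminus\mu]^{<\mu}$ a ``banning'' function (if $\beta\in f(\alpha)$ then no extension may place a club point in $(\alpha,\beta]$). The set $s$ is not required to be closed. Strong properness for $M$ with $M\cap\mu^+\in\mu^+\cap\cof(\mu)$ comes from this $(s,f)$ structure directly (Lemma~\ref{pmuisnice}\ref{pmuisnice:proper}), not from an $\in$-chain of submodels.

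Second, the de-saturation argument is more concrete than your sketch suggests. The banning functions guarantee (Lemma~\ref{pmuisnice}\ref{pmuisnice:club}) that no ground-model $X$ with $|X|^V\geq\mu$ is contained in the generic club $C$; the contradiction with a putative $\kappa^+$-saturated ideal in the extension is obtained by producing, from closure of the generic ultrapower, exactly such a large ground-model subset of the relevant club. Your ``$j(C)$ contains/avoids points opposite to $C$'' gloss is in the right spirit but does not isolate the actual mechanism.
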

It remained open as to whether the above could be done for $\kappa$ an inaccessible cardinal;
this was the content of Question 8.5 of \cite{cox_eskew_kill_sat_save_presat} and further clarifications provided in \cite{cox_schoem_overflow}.

This paper's central result establishes that Question \ref{foremanq13} is consistently false at $\kappa$ inaccessible, by way of partially extending the arguments and results of Theorem 4.1 of \cite{cox_eskew_kill_sat_save_presat}:

\begin{theorem}\label{bigthm}
Suppose $V$ is a universe of $\mathrm{ZFC}$ with an inaccessible cardinal $\kappa$ admitting $\kappa$-complete, normal, $\kappa^+$-saturated ideals on $\kappa$ concentrating on inaccessible cardinals below $\kappa$ (i.e. such that $Inacc_\kappa\in I^+$).
Then there is a poset $\mathbb{Q}$ such that:
\begin{enumerate}[label={(\roman*)}]
\item $V^\mathbb{Q}\models``\text{there are no }\kappa\text{-complete, }\kappa^+\text{-saturated ideals on }\kappa \text{  concentrating on inaccessible cardinals}"$ \label{bigthm:killsat}
\item If $I\in V$ is a $\kappa$-complete, normal, $\kappa^+$-saturated ideal on $\kappa$ concentrating on inaccessible cardinals, 
then $V^\mathbb{Q}\models``\overline{I}\text{ is }\kappa^+\text{-presaturated}"$ \label{bigthm:savepresat}
\end{enumerate}
where $\overline{I}=\{A\in \mathcal{P}^{V^\mathbb{Q}}(\kappa) \mid \exists N\in I \ A\subseteq N\}$.
\end{theorem}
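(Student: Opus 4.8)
The plan is to carry out the inaccessible-cardinal analogue of Theorem~4.1 of \cite{cox_eskew_kill_sat_save_presat}: whereas Cox and Eskew ran a single generalized Baumgartner--Taylor forcing at a successor cardinal, here one \emph{iterates} the analogous forcing along the inaccessible $\kappa$. Concretely, I would let $\mathbb{Q} = \seq{\mathbb{Q}_\alpha, \dot{\mathbb{R}}_\alpha : \alpha \le \kappa}$ be an Easton-support (more precisely a Neeman-style mixed finite-and-$<\kappa$ support) iteration of length $\kappa$ in which $\dot{\mathbb{R}}_\alpha$ is trivial unless $\alpha$ is inaccessible in $V^{\mathbb{Q}_\alpha}$, in which case $\dot{\mathbb{R}}_\alpha$ is the generalized Baumgartner--Taylor forcing $\dot{\mathbb{BT}}_\alpha$: the strongly proper, finite-condition poset with models as side conditions whose generic object is a club $C_\alpha \subseteq \alpha$ carrying the ``guessing'' data that plays, locally at $\alpha$, the role Baumgartner and Taylor's data plays at a successor cardinal in \cite{baumgartner_taylor_sat_gen_ees_two,cox_eskew_kill_sat_save_presat}, while being strongly proper and $\alpha^+$-preserving. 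The finite conditions are the ``along an inaccessible'' substitute for the $<\mu$-conditions used in \cite{cox_eskew_kill_sat_save_presat}: as $\alpha$ varies over the inaccessibles below $\kappa$ there is no uniform $\mu$, so one reverts to Baumgartner and Taylor's original finite side conditions at every stage, and the whole iteration $\mathbb{Q}$ then adds a club $C \subseteq \kappa$ with the corresponding data. I would also arrange, as in \cite{cox_eskew_kill_sat_save_presat}, for a mild $\mathrm{GCH}$-type assumption to persist through $\mathbb{Q}$ where convenient.

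Before the two main arguments one needs the bookkeeping lemmas: (a) each $\mathbb{BT}_\alpha$ is strongly proper for a club of internally approachable elementary submodels and is $\alpha^+$-preserving; (b) $\mathbb{Q}$ preserves the inaccessibility of $\kappa$ (in particular it adds few enough bounded subsets of $\kappa$) and preserves $\kappa^+$; and (c) --- the \emph{key iteration lemma} --- $\mathbb{Q}$ is itself strongly proper for a club of models, so that any elementary $j : V^\mathbb{Q} \to M$ with critical point $\kappa$ admits master conditions and we obtain the factorization $j(\mathbb{Q}) \cong \mathbb{Q} * \dot{\mathbb{BT}}_\kappa * \dot{\mathbb{S}}$ in $M$, where $\dot{\mathbb{S}}$ is (forced to be) trivial below the least inaccessible of $M$ above $\kappa$. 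I expect (c) to be the main obstacle and the real content of the paper. In the successor-cardinal case only a single step was needed, so what is new here is propagating strong properness --- and the precise genericity that Foreman's Duality Theorem requires --- through a $\kappa$-length iteration of finite-condition forcings over a ground model with \emph{no} large-cardinal preparation. I would prove it by adapting the standard ``an iteration of strongly proper forcings with suitable supports is strongly proper'' machinery (in the spirit of Neeman's and Mitchell's forcings with side conditions) to this setting; the delicate points are the behavior at limit stages of the iteration, the interaction of side conditions belonging to different stages, and extracting the exact form of the tail $\dot{\mathbb{S}} = j(\mathbb{Q})/G_\mathbb{Q}$.

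For clause~\ref{bigthm:killsat}: by the standard normalization argument it suffices to refute the existence of a \emph{normal} $\kappa^+$-saturated ideal on $\kappa$ concentrating on inaccessibles in $V^\mathbb{Q}$ --- one checks that the normalized ideal still concentrates on inaccessibles by composing with the factor embedding into the ultrapower by the non-normal ideal. So suppose $J \in V^\mathbb{Q}$ is such a normal ideal, and let $j : V^\mathbb{Q} \to M$ be its generic ultrapower: $\mathrm{crit}(j) = \kappa$, $M$ is closed under $\kappa$-sequences in $V^\mathbb{Q}[G_J]$ by the $\kappa^+$-saturation of $J$, and $M \models ``\kappa$ is inaccessible'' because $J$ concentrates on inaccessibles. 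The guessing data $\vec G = \seq{G_\alpha : \alpha \in Inacc_\kappa}$ added by $\mathbb{Q}$ at the stages below $\kappa$ lies in $V^\mathbb{Q}$, hence its ultrapower image $j(\vec G)(\kappa) = [\,\alpha \mapsto G_\alpha\,]_{G_J}$ lies in $M$, and by {\L}o\'s's theorem it is guessing data of the same kind now sitting at $\kappa$ (since ``$G_\alpha$ is valid guessing data at $\alpha$'' holds for every inaccessible $\alpha$, hence $J$-almost everywhere). But the defining combinatorics of the Baumgartner--Taylor data then yield, inside $M$ --- and therefore inside $V^\mathbb{Q}[G_J]$ --- a witness that $\mathcal{P}(\kappa)/J$ fails the $\kappa^+$-chain condition, namely a $\kappa^+$-sized family of $J$-positive sets almost disjoint modulo $J$, extracted from a Ulam-type matrix on $Inacc_\kappa$. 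This contradicts the $\kappa^+$-saturation of $J$, and is precisely the inaccessible-cardinal version of the de-saturation step of \cite{baumgartner_taylor_sat_gen_ees_two,cox_eskew_kill_sat_save_presat}.

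For clause~\ref{bigthm:savepresat}: fix $I \in V$ normal, $\kappa^+$-saturated, concentrating on inaccessibles, and let $j : V \to M$ be its generic ultrapower, so $\kappa = \mathrm{crit}(j)$ is inaccessible in $M$. Because $\mathbb{Q}$ is strongly proper (iteration lemma), Foreman's Duality Theorem applies with $\mathbb{Q}$ in the role of the preparatory forcing: in $V^\mathbb{Q}$ the quotient $\mathcal{P}(\kappa)/\overline{I}$ is forcing equivalent to $\big(\mathcal{P}(\kappa)/I\big) * \big(j(\mathbb{Q})/G_\mathbb{Q}\big)$, and the induced generic ultrapower of $V^\mathbb{Q}$ by $\overline{I}$ is a lift of $j$ and hence wellfounded. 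It remains to see $\kappa^+$ is preserved: $\mathcal{P}(\kappa)/I$ is $\kappa^+$-c.c.\ since $I$ is $\kappa^+$-saturated in $V$, and $j(\mathbb{Q})/G_\mathbb{Q}$ --- which in $M$ equals $\mathbb{BT}_\kappa * \dot{\mathbb{S}}$ --- preserves $\kappa^+$: its first factor $\mathbb{BT}_\kappa$ is strongly proper, hence $\kappa^+$-preserving, and $\dot{\mathbb{S}}$ is trivial up to the least inaccessible $\lambda$ of $M$ above $\kappa$, with $\lambda$ far above $\kappa^+$, so the tail from $\lambda$ is highly closed and cannot collapse $\kappa^+$. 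Therefore the two-step iteration preserves $\kappa^+$, so $\overline{I}$ is $\kappa^+$-presaturated in $V^\mathbb{Q}$, as required. (Note $j(\mathbb{Q})/G_\mathbb{Q}$ is genuinely \emph{non-trivial}, so $\overline{I}$ does fail $\kappa^+$-saturation; this is consistent with clause~\ref{bigthm:killsat}, whose de-saturation mechanism applies to ideals of $V^\mathbb{Q}$.) The only hypothesis of the Duality Theorem in any danger is exactly the strong properness of $\mathbb{Q}$ furnished by (c), underscoring that the iteration lemma is where the real work lies.
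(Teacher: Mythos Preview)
Your proposal diverges from the paper in several substantive ways, and at least one divergence stems from a misdiagnosis of where the real work lies.

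\textbf{The iterands.} The paper does \emph{not} revert to finite conditions or Neeman-style side conditions. At each inaccessible $\mu<\kappa$ the iterand $\mathbb{P}(\mu)$ uses $<\mu$-sized conditions $(s,f)$ with $s\in[\mu^+\setminus\mu]^{<\mu}$ to add a club $C_\mu\subseteq\mu^+$ (not $\mu$) that contains no ground-model set of size $\geq\mu$. The support is ordinary Easton support; since $\kappa$ is Mahlo, $\mathbb{Q}$ is $\kappa$-cc.

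\textbf{The ``key iteration lemma''.} You identify strong properness of the whole iteration $\mathbb{Q}$ as the central obstacle and the hypothesis Duality needs. In fact the version of Foreman's Duality invoked (Theorem~\ref{easyduality}) requires only that $\mathbb{Q}$ be $\kappa$-cc, which is immediate. No strong properness needs to be propagated through a $\kappa$-length iteration; strong properness is used only locally, at the single stage $\mathbb{P}(\kappa)$ inside $j_I(\mathbb{Q})$.

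\textbf{De-saturation.} The paper's mechanism is not a Ulam-matrix argument on reflected guessing data. In the generic ultrapower $N[g']$ of $V[G]$ by a putative $\kappa^+$-saturated $\mathcal{J}$, stage $\kappa$ of $j(\mathbb{Q})$ adds a club $C_\kappa\subseteq\kappa^+$ such that no set in $N$ of size $\geq\kappa$ lies inside $C_\kappa$. But the full extension $V[G*U]$ is $\kappa^+$-cc over $V$, so there is a $V$-club $D\subseteq C_\kappa$; from $D$ one manufactures a set $E'\in N$ of size $\kappa$ with $E'\subseteq C_\kappa$, a contradiction. Your $[\alpha\mapsto G_\alpha]$ idea does not obviously produce a $\kappa^+$-antichain, and since your iterands act on $\alpha$ rather than $\alpha^+$, the reflected object at $\kappa$ would be a club in $\kappa$, which this argument does not use.

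\textbf{Presaturation.} Rather than arguing only that the quotient preserves $\kappa^+$, the paper shows $\mathcal{B}_I * j_I(\mathbb{Q})$ is $\kappa^+$-proper on the stationary set $IA_{<\kappa^+}\cap\cof(\kappa)$: the initial segment $\mathcal{B}_I*\mathbb{Q}$ is $\kappa^+$-cc, the stage $\mathbb{P}(\kappa)$ is strongly proper for $M$ with $M\cap\kappa^+\in\kappa^+\cap\cof(\kappa)$, and the tail is $<\kappa^+$-directed closed, hence proper on $IA_{<\kappa^+}$. This yields $\kappa^+$-presaturation of $\mathcal{B}_I*j_I(\mathbb{Q})\cong\mathbb{Q}*\mathcal{B}_{\overline I}$, and Lemma~\ref{pushpresatdown} pushes presaturation down to $\mathcal{B}_{\overline I}$. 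Your outline of this step is structurally close, but note that mere $\kappa^+$-preservation of a two-step iteration does not automatically push down to the second factor; one needs presaturation (or an equivalent) of the iteration.
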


We can further generalize Theorem \ref{bigthm}\ref{bigthm:savepresat} as follows:

\begin{theorem}\label{savemorepresat}
With the same assumptions, there is a $\mathbb{Q}$ such that if $\delta\geq\kappa$ is an inaccessible cardinal, $I\in V$ is normal, fine, precipitous, $\delta^+$-presaturated ideal of uniform completeness $\kappa$ on some algebra of sets $Z$ such that:
\begin{itemize}
\item $\mathcal{B}_I:=Z/I$ preserves the regularity of both $\kappa$ and $\delta$;
\item $\forces_{\mathcal{B}_I} \delta^+\leq|\dot{j}_I(\kappa)|=\delta<\dot{j}_I(\kappa)$
where $\dot{j}_I$ is a name for the generic elementary embedding $j_I:V\to M$;
\item $\mathcal{B}_I$ is $\delta^+$-proper on $IA_{<\delta^+}$;
\end{itemize}
then in $V^\mathbb{Q}$,
\begin{itemize}
\item $\overline{I}$ is not $\delta^+$-saturated
\item but $\overline{I}$ is $\delta^+$-presaturated
\end{itemize}
where $\overline{I}$ is as above.
\end{theorem}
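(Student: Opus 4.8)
The plan is to take $\mathbb{Q}$ to be the poset already constructed for Theorem~\ref{bigthm} --- the (reverse Easton) iteration of generalized Baumgartner--Taylor club-shooting forcings indexed along the inaccessibles $\leq\kappa$ --- and to analyze $\overline{I}$ in $V^{\mathbb{Q}}$ through Foreman's Duality Theorem applied to the ideal $I$ on $Z$. Since the iterands of $\mathbb{Q}$, and hence $\mathbb{Q}$ itself, carry dense sets of strongly generic conditions, in the generic ultrapower $M = \Ult(V,G)$ (with $G$ a $\mathcal{B}_I$-generic) the poset $j_I(\mathbb{Q})$ factors as $\mathbb{Q}*\dot{\mathbb{R}}$, where the ``tail'' $\dot{\mathbb{R}}$ is the part of the iteration acting on the interval $[\kappa, j_I(\kappa))$; Foreman's Duality Theorem should then give $\mathbb{Q}*\dot{\mathcal{B}}_{\overline{I}} \cong \mathcal{B}_I*\dot{\mathbb{R}}$, with the generic ultrapower embedding of $\overline{I}$ extending $j_I$ to a map $V^{\mathbb{Q}} \to M[G^*]$ for an $M$-generic $G^*$ on $j_I(\mathbb{Q})$. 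To invoke the Duality Theorem here I would check --- rerunning the analysis behind Theorem~\ref{bigthm} with $\delta^+$ in place of $\kappa^+$ --- that $\mathbb{Q}$ and its $j_I$-image supply the strongly generic / master conditions the theorem needs and are ``$\delta^+$-proper on $IA_{<\delta^+}$''.

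Granting the Duality picture, presaturation of $\overline{I}$ in $V^{\mathbb{Q}}$ splits into two parts. Precipitousness is immediate, since the generic ultrapower of $\overline{I}$ is isomorphic to a forcing extension $M[G^*]$ of the wellfounded structure $M$ (wellfounded because $I$ is precipitous). For preservation of $\delta^+$ it suffices, by the isomorphism, that $\mathcal{B}_I*\dot{\mathbb{R}}$ preserve $\delta^+$: the first factor does because $I$ is $\delta^+$-presaturated, and the real work is to show that the $\delta^+$-strong-properness $\dot{\mathbb{R}}$ enjoys inside $M$ survives to $V[\mathcal{B}_I]$. This is where the remaining hypotheses enter: ``$\mathcal{B}_I$ preserves the regularity of $\kappa$ and $\delta$'' keeps the cardinals around which the tail is built from being collapsed, the prescription on $j_I(\kappa)$ controls the tail's length, and ``$\mathcal{B}_I$ is $\delta^+$-proper on $IA_{<\delta^+}$'' is exactly what lets one capture, inside $M$, the internally approachable chains of $V[\mathcal{B}_I]$ of length $<\delta^+$ that the properness argument for $\dot{\mathbb{R}}$ requires; this is the device by which Cox--Eskew pushed ``$\kappa^+$-proper on $IA_{<\kappa^+}$'' through the Duality Theorem in Theorem~1.2 of \cite{cox_eskew_kill_sat_save_presat}. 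Since $\mathbb{Q}$ has size $\leq\delta$, hence is $\delta^+$-c.c., preservation of $\delta^+$ by $\mathcal{B}_I*\dot{\mathbb{R}}$ transfers back to ``$\dot{\mathcal{B}}_{\overline{I}}$ preserves $\delta^+$'' over $V^{\mathbb{Q}}$, and $\overline{I}$ is $\delta^+$-presaturated.

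For the failure of $\delta^+$-saturation I would exploit that the tail is a genuinely long, nontrivial club-shooting iteration: because $\delta < j_I(\kappa)$, $\dot{\mathbb{R}}$ shoots clubs through ordinals that $V[\mathcal{B}_I]$ sees as being of size at least $\delta^+$, and by the standard analysis of the Baumgartner--Taylor forcing such an iteration is not $\delta^+$-c.c.\ over $V[\mathcal{B}_I]$. Then $\mathcal{B}_I*\dot{\mathbb{R}}$, hence $\mathbb{Q}*\dot{\mathcal{B}}_{\overline{I}}$, fails the $\delta^+$-c.c.; since $\mathbb{Q}$ itself is $\delta^+$-c.c., it follows that $\mathbb{Q}$ forces ``$\dot{\mathcal{B}}_{\overline{I}}$ fails the $\delta^+$-c.c.'', i.e.\ $\overline{I}$ is not $\delta^+$-saturated in $V^{\mathbb{Q}}$. (In the sub-case where $I$ itself already fails $\delta^+$-saturation this is immediate, as $\mathcal{B}_I$ completely embeds into $\mathbb{Q}*\dot{\mathcal{B}}_{\overline{I}}$.)

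The hard part will be the properness bookkeeping underlying the first paragraph, in its $\delta^+$-presaturation form. In the successor case $\kappa = \mu^+$ one uses ${<}\mu$-sized conditions; here $\kappa$ is inaccessible, the iterands use finite conditions, and one must verify not only that $\mathbb{Q}$ is $\delta^+$-proper on $IA_{<\delta^+}$ with the requisite strongly generic conditions but that this persists for the image $j_I(\mathbb{Q})$ as computed in the generic ultrapower --- so that the Duality Theorem delivers preservation of $\delta^+$ and not merely wellfoundedness. Equivalently, the crux is to arrange the hypotheses on $\mathcal{B}_I$ so that the tail $j_I(\mathbb{Q})/\mathbb{Q}$ is at once long enough to destroy $\delta^+$-saturation and, from the standpoint of $V[\mathcal{B}_I]$ rather than just $M$, proper enough to preserve $\delta^+$.
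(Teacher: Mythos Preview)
Your overall strategy matches the paper's: take the same $\mathbb{Q}$, apply Foreman's Duality, analyze $j_I(\mathbb{Q})$ piecewise, and deduce non-$\delta^+$-saturation from the failure of $\delta^+$-cc in the tail while getting $\delta^+$-presaturation from $\delta^+$-properness on a stationary set together with Lemma~\ref{pushpresatdown}. But several details are off.

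First, the Duality isomorphism you wrote is wrong: Theorem~\ref{easyduality} gives $\mathcal{B}(\mathbb{Q}*\dot{\mathcal{B}}_{\overline{I}})\cong\mathcal{B}(\mathcal{B}_I*\dot{j}_I(\mathbb{Q}))$, with the \emph{full} $j_I(\mathbb{Q})$ on the right, not just the tail $\dot{\mathbb{R}}$. Second, the iterands of $\mathbb{Q}$ are not finite-condition posets; $\mathbb{P}(\mu)$ uses ${<}\mu$-sized conditions (Definition~\ref{defpmu}). Third, and more substantively, the paper does not argue by transferring properness of $\dot{\mathbb{R}}$ from $M$ to $V[\mathcal{B}_I]$; instead it shows directly that $\mathcal{B}_I*\dot{j}_I(\mathbb{Q})$ is $\delta^+$-proper on a stationary subset of $\mathcal{P}^*_{\delta^+}(H_\theta)^V$ via the decomposition
\[
\mathcal{B}_I*\mathbb{Q}*\bigl(\dot{j}_I(\mathbb{Q})\upharpoonright[\kappa,\delta)\bigr)*\dot{\mathbb{P}}(\delta)*\bigl(\dot{j}_I(\mathbb{Q})\upharpoonright[\delta^+,j_I(\kappa))\bigr).
\]
The hypothesis ``$\mathcal{B}_I$ is $\delta^+$-proper on $IA_{<\delta^+}$'' is the \emph{first} step of this chain (not a device for capturing chains inside $M$); the next two factors are $\delta^+$-cc (the second because $\delta$ is inaccessible in $\Ult(V,I)$, so the $[\kappa,\delta)$-segment has size $\leq\delta$), and Lemma~\ref{lemma-iaposets}\ref{lemma-iaposets:iter} propagates $\delta^+$-properness on $IA_{<\delta^+}$ through them. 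Then $\dot{\mathbb{P}}(\delta)$ is (strongly) proper on a club in $\mathcal{P}^*_{\delta^+}(H_\theta)\cap\cof(\delta)$ by Lemma~\ref{pmuisnice}\ref{pmuisnice:proper}, and the final segment is ${<}\delta^+$-directed closed, hence proper on $IA_{<\delta^+}$; combining yields $\delta^+$-properness on $IA_{<\delta^+}\cap\cof(\delta)$. For non-saturation, the precise point is that $\delta$ remains inaccessible in $\Ult(V,I)$, so $\mathbb{C}(\delta)=\mathbb{P}(\delta)$ actually appears in the tail, and $\mathbb{P}(\delta)$ is nowhere $\delta^+$-cc by Lemma~\ref{pmuisnice}\ref{pmuisnice:notcc}.
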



Here, $IA_{<\delta}$ is the collection of internally approachable structures of length $<\delta$; we will give a precise definition later in Section \ref{prelims}.

\begin{remark}
It will turn out that the same $\mathbb{Q}$ will work for both Theorem \ref{bigthm} and Theorem \ref{savemorepresat}.
\end{remark}

\begin{remark}
In \cite{cox_eskew_kill_sat_save_presat}, the analogous theorem (Theorem 4.1(2)) argued that there is an $S\in \overline{I}^+$ such that $\overline{I}\upharpoonright S$ is not $\delta$-saturated, but it is $\delta$-presaturated.

The use of such an $S$ was required there due to the forcing involved not being $\kappa$-cc.
\end{remark}

This paper is structured as follows. 
Section \ref{prelims} presents the preliminary definitions and facts pertinent to this paper.
Section \ref{the-itern} introduces the forcing iteration $\mathbb{Q}$ of Theorems \ref{bigthm}\ref{bigthm:killsat}, \ref{bigthm}\ref{bigthm:savepresat}, and \ref{savemorepresat}.
Section \ref{killsat} shows that several saturated ideals are sundered from $V^{\mathbb{Q}}$.
Section \ref{savepresat} proves that a portion of presaturated posets remain presaturated in $V^{\mathbb{Q}}$.
Section \ref{open-qs} concludes and catalogs some conjectures.

\section{Preliminaries and notations}
\label{prelims}
Here are some definitions, theorems, and notations we use.

For a cardinal $\kappa$, we will write $Reg_\kappa$ for the set of regular cardinals below $\kappa$, and $\cof(\kappa)$ for the proper class of cardinals of cofinality $\kappa$.

If $\mathbb{P}$ is a notion of forcing in $V$, we will variously use $V^\mathbb{P}$ or $V[G]$ to refer to the generic extension of $V$ by $\mathbb{P}$.

We will further take for granted that the reader is familiar with forcing, iterated forcing, and ultrapowers.

\begin{definition}[ideals]
Let $\kappa$ be a cardinal.
An \emph{ideal} $I$ on $\kappa$ is a subset of $\mathcal{P}(\kappa)$ such that:
\begin{enumerate}
\item $\emptyset\in I$, $\kappa\notin I$
\item If $A\in I$ and $B\subseteq A$ then $B\in I$
\item If $A,B\in I$ then $A\cup B\in I$
\end{enumerate}

For $\mu\in Reg_\kappa$, the ideal $I$ is said to be \emph{$\mu$-complete} if whenever $\lambda\in Reg_\mu$ and $\seq{A_\alpha \mid \alpha<\lambda}\subseteq I$ then
\[\bigcup_{\alpha<\lambda} A_\alpha\]
is also in $I$.

The ideal $I$ is said to be \emph{normal} if whenever $\seq{A_\alpha\mid \alpha<\kappa}\subseteq I$, we have that the diagonal union
\[\nabla_{\alpha<\kappa} A_\alpha:=\{\beta<\kappa \mid \exists \alpha<\beta \ \beta\in A_\alpha\}\]
is also in $I$.
\end{definition}

An ideal is \emph{principal} if it contains a cofinite set; for our purposes, ideals are always assumed to be nonprincipal.

For an ideal $I$ on $\kappa$, we define $I^+:=\{S\subseteq \kappa \mid S\notin I\}$.

For example, $NS_\kappa$, the collection of nonstationary sets on $\kappa$, forms a normal ideal; its dual filter is the club filter on $\kappa$, and $(NS_\kappa)^+$ is the collection of stationary sets on $\kappa$.

\begin{definition}
If $I$ is an ideal on $\kappa$ then we may define an equivalence relation $\simeq_I$ on $\mathcal{P}(\kappa)$ by $A\simeq B$ if and only if $(A\setminus B)\cup (B\setminus A)\in I$.

We say that $A\leq_I B$ if $A\setminus B\in I$.

We may consider the equivalence classes $\mathcal{P}(\kappa)/I:=\{[A]_{\simeq_I}\mid A\subseteq \kappa\}$ as a poset with partial order $\leq_I$.
\end{definition}

Given $I$ an ideal on $\kappa$, we will write $\mathcal{B}_I:=\left(\mathcal{P}(\kappa)/I\right)\setminus [\emptyset]_{\simeq_I}$; when thinking of $\mathcal{B}_I$ as a poset, we will implicitly use the partial ordering $\leq_I$ and in many cases, $\mathcal{B}_I$ will be a separative notion of forcing (or even a complete Boolean algebra).

The above two definitions are Definitions 2.1, 2.17, and 2.18\footnote{In \cite{foreman_ideals_gen_ees}, a different version of normality is taken to be definitional, and the equivalence of these two versions is Proposition 2.19 of \cite{foreman_ideals_gen_ees}.} of \cite{foreman_ideals_gen_ees}.

The following definition summarizes some forcing properties of posets that will come in handy:

\begin{definition}[Chain condition, presaturation, and closure]
Let $(\mathbb{P},\leq)$ be a poset. We say that:
\begin{enumerate}[label={(\roman*)}]
\item (\cite{baumgartner_taylor_sat_gen_ees_two}, as Theorem 4.2) $\mathbb{P}$ is \emph{$\mu$-presaturated} if for every $\lambda<\mu$ and every family $\seq{A_\alpha \mid \alpha<\lambda}$ of antichains,
there are densely many $p\in\mathbb{P}$ such that for all $\alpha$, $\{q\in A_\alpha \mid p\compat q\}$ has cardinality $<\mu$. Note that $\mu$-cc implies $\mu$-presaturation.

\item $\mathbb{P}$ is \emph{$<\kappa$-closed} if whenever $\tau<\kappa$ and $\seq{p_\alpha \mid \alpha<\tau}$ is a $\leq$-decreasing sequence in $\mathbb{P}$, there is a $p\in\mathbb{P}$ such that $p\leq p_\alpha$ for all $\alpha<\tau$

\item $\mathbb{P}$ is \emph{$<\kappa$-directed closed} ($<\kappa$-dc) if whenever $D\subseteq \mathbb{P}$ is a directed set\footnote{that is, for all $p,q\in D$, there is an $r\in D$ such that $r\leq p,q$} with $|D|<\kappa$, there is a $q\in \mathbb{P}$ such that whenever $p\in D$, $q\leq p$

\item $\mathbb{P}$ is \emph{$\mu$-preserving} (for $\mu$ a $V$-cardinal) if $V^\mathbb{P}\models ``\check{\mu}\text{ is a cardinal}"$
\end{enumerate}
\end{definition}

Some of these properties have analogues for ideals as well.
To begin with, if $I$ is a $\kappa$-complete ideal on $\kappa$ then a $\mathcal{B}_I$-generic object induces a $V$-$\kappa$-complete ultrafilter $U$ on $\kappa$ from which the ultrafilter $\Ult(V,U)$ may be formed.
If $I$ is additionally normal, then $U$ will be $V$-normal.

\begin{definition}[Definition 2.4 of \cite{foreman_ideals_gen_ees}]
An ideal $I$ is said to be \emph{precipitous} if whenever $U$ is a $\mathcal{B}_I$-generic object over $V$,
$\Ult(V,U)$ is well-founded.
\end{definition}

For $I$ an ideal on $\kappa$, we will say that $I$ is defined to be \emph{$\mu$-saturated} if $\mathcal{B}_I$ has the $\mu$-cc.
It is well known that a $\mu$-saturated ideal is precipitous and $\mu$-preserving, i.e. that $V^{\mathcal{B}_I}\models``\mu\text{ is a cardinal}"$.
However, we make the following caveat:

\begin{caveat}
In contrast to the above definition, an ideal $I$ is defined to be $\mu$-presaturated if $I$ is precipitous and $\mu$-preserving.

Only for precipitous ideals $I$ is it the case that $I$ is $\mu$-presaturated if and only if $\mathcal{B}_I$ is $\mu$-presaturated as a forcing poset.
\end{caveat}

Presaturation can be pushed downwards through an iteration:

\begin{lemma}[Lemma 2.12 of \cite{cox_eskew_kill_sat_save_presat}]\label{pushpresatdown}
If $\mathbb{P} * \dot{\mathbb{Q}}$ is $\kappa$-presaturated then $\mathbb{P}$ is $\kappa$-presaturated and $1_\mathbb{P}\forces \dot{\mathbb{Q}}$ is $\kappa$-presaturated.
\end{lemma}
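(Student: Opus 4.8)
The plan is to reduce both assertions to the $\kappa$-presaturation of $\mathbb{P}*\dot{\mathbb{Q}}$ by transferring families of $<\kappa$-many antichains between the two-step iteration and its factors, and then back down again.

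For the first assertion, that $\mathbb{P}$ is $\kappa$-presaturated, I would fix $\lambda<\kappa$, antichains $\langle A_\alpha \mid \alpha<\lambda\rangle$ in $\mathbb{P}$, and $p_0\in\mathbb{P}$, aiming to produce $p\le p_0$ with $|\{q\in A_\alpha \mid p\compat q\}|<\kappa$ for every $\alpha$. First, lift each $A_\alpha$ to $\tilde A_\alpha := \{(q,\dot 1)\mid q\in A_\alpha\}$, where $\dot 1$ names the maximum of $\dot{\mathbb{Q}}$; since a common extension of $(q,\dot 1)$ and $(q',\dot 1)$ yields a common extension of $q$ and $q'$ in $\mathbb{P}$, each $\tilde A_\alpha$ is an antichain in $\mathbb{P}*\dot{\mathbb{Q}}$. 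Apply $\kappa$-presaturation of $\mathbb{P}*\dot{\mathbb{Q}}$ to $\langle \tilde A_\alpha\mid\alpha<\lambda\rangle$ below $(p_0,\dot 1)$ to get $(p,\dot r)\le(p_0,\dot 1)$ meeting each $\tilde A_\alpha$ compatibly in fewer than $\kappa$ conditions. Then $p\le p_0$ works: if $q\in A_\alpha$ and $p''\le_\mathbb{P} p,q$, then $(p'',\dot r)$ extends both $(p,\dot r)$ and $(q,\dot 1)$, so $q\mapsto (q,\dot 1)$ embeds $\{q\in A_\alpha\mid p\compat q\}$ into $\{(q,\dot 1)\in\tilde A_\alpha\mid (p,\dot r)\compat (q,\dot 1)\}$, which has size $<\kappa$. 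This direction is essentially immediate once the lifting is in place.

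For the second assertion I would work in a generic extension $V[G]$ by $\mathbb{P}$ with $p_1\in G$ and argue that $\dot{\mathbb{Q}}^G$ is $\kappa$-presaturated there. Back in $V$, fix $(p_0,\dot q_0)\in\mathbb{P}*\dot{\mathbb{Q}}$, $\lambda<\kappa$, and names $\langle\dot A_\alpha\mid\alpha<\lambda\rangle$ with $p_0\forces$``each $\dot A_\alpha$ is an antichain in $\dot{\mathbb{Q}}$''. The crucial step is to choose, for each $\alpha$, a \emph{reduced} nice name for $\dot A_\alpha$: a \emph{set} $\mathcal S_\alpha$ of $\mathbb{P}$-names for members of $\dot{\mathbb{Q}}$, together with antichains $C_{\dot\sigma}\subseteq\mathbb{P}\upharpoonright p_0$ for $\dot\sigma\in\mathcal S_\alpha$, such that (i) over any $V[G]$ with $p_0\in G$ one has $\dot A_\alpha^G=\{\dot\sigma^G\mid \dot\sigma\in\mathcal S_\alpha,\ C_{\dot\sigma}\cap G\neq\emptyset\}$, and (ii) whenever $\dot\sigma\neq\dot\sigma'$ in $\mathcal S_\alpha$ and $q\in C_{\dot\sigma}$, $q'\in C_{\dot\sigma'}$ are compatible, every common extension forces $\dot\sigma\neq\dot\sigma'$, hence (by antichainness) forces incompatibility in $\dot{\mathbb{Q}}$. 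Such a name exists by assigning each member of $\dot A_\alpha$ to the least name representing it in a fixed well-ordering of a sufficient set of names. Property (ii) guarantees that $B_\alpha:=\{(q,\dot\sigma)\mid\dot\sigma\in\mathcal S_\alpha,\ q\in C_{\dot\sigma}\}$ is an antichain in $\mathbb{P}*\dot{\mathbb{Q}}$. Now apply $\kappa$-presaturation of $\mathbb{P}*\dot{\mathbb{Q}}$ to $\langle B_\alpha\mid\alpha<\lambda\rangle$ to obtain $(p_1,\dot r)\le(p_0,\dot q_0)$ with $|\{b\in B_\alpha\mid b\compat (p_1,\dot r)\}|<\kappa$ for every $\alpha$. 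I claim $p_1\forces$``for all $\alpha$, $|\{s\in\dot A_\alpha\mid \dot r\compat s\}|<\kappa$''. To see this, pass to $V[G]$ with $p_1\in G$: given $s\in\dot A_\alpha^G$ with $\dot r^G\compat s$, fix $\dot\sigma_s\in\mathcal S_\alpha$ with $\dot\sigma_s^G=s$ and $C_{\dot\sigma_s}\cap G\ne\emptyset$, and fix $q_s\in C_{\dot\sigma_s}\cap G$; then $(q_s,\dot\sigma_s)\in B_\alpha$, and using that $p_1,q_s\in G$ together with a condition in $G$ witnessing that $s$ has a common lower bound with $\dot r^G$ in $\dot{\mathbb{Q}}^G$, one builds a common extension of $(q_s,\dot\sigma_s)$ and $(p_1,\dot r)$ in $\mathbb{P}*\dot{\mathbb{Q}}$; so $(q_s,\dot\sigma_s)\in\{b\in B_\alpha\mid b\compat(p_1,\dot r)\}$. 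Since distinct values of $s$ arise from distinct names $\dot\sigma_s$, the map $s\mapsto(q_s,\dot\sigma_s)$ is injective, so $\{s\in\dot A_\alpha^G\mid \dot r^G\compat s\}$ injects into a set of $V$-size $<\kappa$ and hence has size $<\kappa$ in $V[G]$. Standard reductions (refining $p_0$ to decide $\lambda$, and the density manipulation that turns ``below any condition there is one forcing a witness exists'' into a forcing statement for all names for families) then upgrade this to $1_\mathbb{P}\forces$``$\dot{\mathbb{Q}}$ is $\kappa$-presaturated''.

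The hard part will be the name bookkeeping in the previous paragraph: ensuring that the reduced nice names can be taken to be honest sets (so each $B_\alpha$ is an antichain to which $\kappa$-presaturation legitimately applies), and checking the two compatibility transfers — pushing a $\dot{\mathbb{Q}}$-level incompatibility up into $B_\alpha$ (which is precisely what (ii) is for), and pulling a $\mathbb{P}*\dot{\mathbb{Q}}$-level compatibility with $(p_1,\dot r)$ down to a $\dot{\mathbb{Q}}^G$-level statement over the generic. Everything else is routine.
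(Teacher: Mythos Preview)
The paper does not prove this lemma; it is cited without proof from \cite{cox_eskew_kill_sat_save_presat}. Your argument is sound: lifting $\mathbb{P}$-antichains to $\mathbb{P}*\dot{\mathbb{Q}}$ via $q\mapsto(q,\dot 1)$ handles the first assertion directly, and your reduced-nice-name construction for the second --- choosing, for each element of $\dot A_\alpha$, its $\prec$-least representing name so that distinct representatives are forced to take distinct values --- is precisely what is needed to make each $B_\alpha$ a genuine antichain in $\mathbb{P}*\dot{\mathbb{Q}}$ and to ensure the injectivity of $s\mapsto(q_s,\dot\sigma_s)$ at the end. The one place worth being explicit in a write-up is the construction of $C_{\dot\sigma}$: it should be obtained as the positive part of a maximal antichain below $p_0$ of conditions \emph{deciding} the statement ``$\dot\sigma\in\dot A_\alpha$ and no $\dot\tau\prec\dot\sigma$ has $\dot\tau=\dot\sigma$'', rather than merely a maximal antichain among conditions forcing it positively; otherwise a generic $G$ in which the statement holds need not meet $C_{\dot\sigma}$, and (i) could fail. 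With that adjustment your compatibility transfers in both directions go through as stated.
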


Whether the converse holds is currently an open problem; this appears as Question 8.6 of \cite{cox_eskew_kill_sat_save_presat}. 

Next we go over the notion of properness and relate properness and closedness to presaturation.
Let $\delta$ be regular uncountable, and let $H\supsetneq\delta$.
Then we write $\mathcal{P}_\delta(H)$ for all subsets of $H$ of size $<\delta$, and $\mathcal{P}^*_\delta(H)$ to denote the set of all $x\in \mathcal{P}_\delta(H)$ such that $x\cap\delta\in\delta$.

\begin{definition}
Let $\mathbb{P}$ be a notion of forcing, $\theta$ sufficiently large so that $\mathbb{P}\in H_\theta$, and $M\prec (H_\theta,\in,\mathbb{P})$. 

We say that $p\in\mathbb{P}$ is an \emph{$(M,\mathbb{P})$-master condition} if for every dense $D\in M$, $D\cap M$ is predense below $p$; equivalently, $p\forces_{\mathbb{P}} M[\dot{G}_{\mathbb{P}}]\cap V=M$. 

Additionally, we say that $p$ is an \emph{$(M,\mathbb{P})$-strong master condition} if for every $p'\leq p$, there is some $p'_M\in M\cap\mathbb{P}$ such that every extension of $p'_M$ in $M\cap\mathbb{P}$ is compatible with $p'$. 
\footnote{It is straightforward to see that strong master conditions are also master conditions.}

Further, $\mathbb{P}$ is \emph{(strongly) proper with respect to $M$} if every $p\in M\cap\mathbb{P}$ has a $q\leq p$ such that $q$ is an $(M,\mathbb{P})$-(strong) master condition.

We say that $\mathbb{P}$ is \emph{(strongly) $\delta$-proper on a stationary set} if there is a stationary subset $S$ of $\mathcal{P}^*_\delta(H_\theta)$ such that for every $M\in S$, $M\prec (H_\theta,\in,\mathbb{P})$ and $\mathbb{P}$ is (strongly) proper with respect to $M$.
\end{definition}

Note that $\{M\in \mathcal{P}^*_\delta(H_\theta)\mid M\prec (H_\theta,\in,\mathbb{P})\}$ is a club subset of $\mathcal{P}^*_\delta(H_\theta)$; so a forcing being $\delta$-proper on a stationary set really only depends on the properness condition.

\begin{fact}
If $\mathbb{P}$ is $\delta$-proper on a stationary set, then $\mathbb{P}$ is $\delta$-presaturated.
\end{fact}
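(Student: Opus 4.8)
The plan is to unpack both definitions and exploit the fact that, below an $(M,\mathbb{P})$-master condition, a condition can be compatible only with those members of a maximal antichain that already lie in the model $M$ — and $M$ has size $<\delta$.

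To verify $\delta$-presaturation, fix $\lambda<\delta$, a family $\seq{A_\alpha \mid \alpha<\lambda}$ of antichains of $\mathbb{P}$, and an arbitrary $p_0\in\mathbb{P}$; I must produce $p\leq p_0$ with $|\{q\in A_\alpha \mid p\compat q\}|<\delta$ for all $\alpha<\lambda$. First I would pick $\theta$ large enough that $\mathbb{P}$ and $\seq{A_\alpha\mid\alpha<\lambda}$ lie in $H_\theta$, and take a stationary $S\subseteq\mathcal{P}^*_\delta(H_\theta)$ witnessing that $\mathbb{P}$ is $\delta$-proper on a stationary set. Since the set of $M\prec(H_\theta,\in,\mathbb{P})$ with $\seq{A_\alpha\mid\alpha<\lambda},p_0,\lambda\in M$ is club in $\mathcal{P}^*_\delta(H_\theta)$ — and for any such $M$ one automatically has $\lambda\subseteq M$, since $\lambda\in M\cap\delta\in\delta$ forces $\lambda\subseteq M\cap\delta$ — I can choose $M\in S$ inside this club. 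Then every $A_\alpha$ belongs to $M$, and properness with respect to $M$ yields an $(M,\mathbb{P})$-master condition $p\leq p_0$.

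The heart of the argument is the claim that $\{q\in A_\alpha \mid p\compat q\}\subseteq M$ for every $\alpha$, which suffices since $|M|<\delta$. Here I would use elementarity to fix, inside $M$, a maximal antichain $\bar A_\alpha\supseteq A_\alpha$, so that $D_\alpha:=\{s\in\mathbb{P}\mid\exists q'\in\bar A_\alpha,\ s\leq q'\}$ is a dense subset of $\mathbb{P}$ lying in $M$. If $q\in A_\alpha$ and $p\compat q$, say $r\leq p,q$, then since $p$ is a master condition $D_\alpha\cap M$ is predense below $p$, so some $s\in D_\alpha\cap M$ is compatible with $r$; the unique $q'\in\bar A_\alpha$ above $s$ is definable from $s,\bar A_\alpha\in M$ and hence lies in $M$, while a common extension of $s$ and $r$ witnesses that $q'$ and $q$ are compatible members of the antichain $\bar A_\alpha$, forcing $q=q'\in M$. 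As $p_0$ was arbitrary this gives the required density. I expect no real obstacle beyond bookkeeping: the only care needed is locating $M$ simultaneously in the stationary set $S$ and in the club of models containing all the relevant parameters and all of $\lambda$, together with the recognition that "master condition" is precisely the predensity statement invoked above.
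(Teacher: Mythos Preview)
Your argument is correct and is precisely the standard proof. The paper does not actually prove this statement; it records it as a fact, citing Fact~2.8 of Cox--Eskew (which in turn generalizes Foreman--Magidor's Proposition~3.2 for $\delta=\omega_1$), so there is no alternative approach in the paper to compare against. Your write-up matches that standard argument: pick $M$ in the stationary set containing the sequence of antichains, take a master condition below $p_0$, and use predensity of $D_\alpha\cap M$ to trap every compatible member of $A_\alpha$ inside $M$.
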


This fact appears as Fact 2.8 of \cite{cox_eskew_kill_sat_save_presat}, with proof; their proof, in turn, generalizes a result of Foreman and Magidor in the case of $\delta=\omega_1$ (namely, Proposition 3.2 of \cite{foreman_magidor_lcs_ctx_ch}).

For the posets we will be working with, we will have a specific stationary subset witnessing $\delta$-properness:

\begin{definition}
For $\delta$ regular and $\theta>>\delta$, we say that $IA_{<\delta}\subseteq \mathcal{P}^*_\delta(H_\theta)$, 
the ``internally approachable sets of length $<\delta$", 
is the collection of all $M\in \mathcal{P}^*_\delta(H_\theta)$, with $|M|=|M\cap \delta|$, that are \emph{internally approachable},
i.e. such that there is a $\zeta<\delta$ and a continuous $\subseteq$-increasing sequence $\seq{N_\alpha\mid \alpha<\zeta}$ whose union is $M$, such that $\vec{N}\upharpoonright \alpha\in M$ for all $\alpha<\zeta$.
\end{definition}

In a sense, internal approachability is preserved by any generic extension:

\begin{fact}\label{principle-A}
Suppose $\mathbb{P}$ is a poset, $M\prec(H_\theta,\in,\mathbb{P})$, $\seq{N_\alpha\mid\alpha<\zeta}$ witnesses that $M\in IA_{<\delta}$, and $G$ is $(V,\mathbb{P})$-generic.
Then in $V[G]$, $\seq{N_\alpha[G]\mid \alpha<\zeta}$ witnesses that $M[G]\in IA_{<\delta}$. (Without loss of generality, we may assume that $\mathbb{P}\in N_0$.)
\end{fact}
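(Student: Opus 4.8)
The plan is to interpret $M[G]$ and each $N_\alpha[G]$ as the collections of $G$-interpretations of the $\mathbb{P}$-names lying in $M$, respectively in $N_\alpha$; since $\mathbb{P}\in N_0\subseteq M$ and, for $\theta$ taken as large as usual, $H_\theta^{V[G]}=H_\theta^V[G]$, the standard argument gives $M[G]\prec H_\theta^{V[G]}$ and likewise each $N_\alpha[G]\prec H_\theta^{V[G]}$. Unwinding the definition of $IA_{<\delta}$, there are four things to verify in $V[G]$: that $\seq{N_\alpha[G]\mid\alpha<\zeta}$ is a continuous $\subseteq$-increasing sequence; that its union is $M[G]$; that $\seq{N_\beta[G]\mid\beta<\alpha}\in M[G]$ for every $\alpha<\zeta$; and that $M[G]\in\mathcal{P}^*_\delta(H_\theta^{V[G]})$ with $|M[G]|=|M[G]\cap\delta|$. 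The first three are essentially formal, and the last is the only place where the behavior of $\mathbb{P}$ genuinely enters.

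For the chain, monotonicity is immediate since $N_\alpha\subseteq N_\beta$ forces $N_\alpha[G]\subseteq N_\beta[G]$; continuity at a limit $\lambda<\zeta$ and the identity $\bigcup_{\alpha<\zeta}N_\alpha[G]=M[G]$ both reduce to the ground-model facts $N_\lambda=\bigcup_{\alpha<\lambda}N_\alpha$ and $M=\bigcup_{\alpha<\zeta}N_\alpha$, because a $\mathbb{P}$-name belongs to a union of the $N_\alpha$'s exactly when it belongs to one of them, and the index $\zeta$ is unchanged and still below $\delta$. For the self-reference clause I would use that $\seq{N_\beta\mid\beta<\alpha}\in M$ by hypothesis together with the fact that the assignment $N_\beta\mapsto N_\beta[\dot G]$ is uniformly definable (in $V^{\mathbb{P}}$) from $N_\beta$ and $\mathbb{P}$; hence there is a $\mathbb{P}$-name $\dot\sigma$, definable in $H_\theta$ from $\seq{N_\beta\mid\beta<\alpha}$ and $\mathbb{P}$, with $\dot\sigma^G=\seq{N_\beta[G]\mid\beta<\alpha}$, and since $M\prec H_\theta$ contains both parameters, $\dot\sigma\in M$, so $\seq{N_\beta[G]\mid\beta<\alpha}=\dot\sigma^G\in M[G]$. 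The one slightly delicate point here is that one must name the \emph{sequence of interpretations}, not merely interpret the check-name of the sequence.

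The remaining clause is where the chain condition of $\mathbb{P}$ is needed, and it is why the Fact is invoked only for posets $\mathbb{P}$ having the $\delta$-chain condition (the $\mathbb{Q}$ of this paper is $\delta$-cc for the relevant $\delta\geq\kappa$, and we apply the Fact only to such posets). The cardinality side is painless: interpretation maps the $\mathbb{P}$-names of $M$ onto $M[G]$, so $|M[G]|\leq|M|$ in $V[G]$. The real content is that $M[G]\cap\delta$ remains an ordinal, which can fail if $\mathbb{P}$ collapses $\delta$; assuming the $\delta$-cc I would show $M[G]\cap\delta=M\cap\delta$ as follows. If $\xi<\delta$ equals $\dot x^G$ for a name $\dot x\in M$, then $A:=\{\beta<\delta\mid \exists p\in\mathbb{P}\ p\forces\dot x=\check\beta\}$ is definable in $H_\theta$ from $\dot x$ and $\mathbb{P}$, hence $A\in M$; conditions forcing distinct values of $\dot x$ are incompatible, so the $\delta$-cc gives $|A|<\delta$; since $M\cap\delta$ is an ordinal, the order type of $A$ lies in $M\cap\delta$ and the monotone enumeration of $A$ lies in $M$, so $A\subseteq M$ and in particular $\xi\in M\cap\delta$. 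Thus $M[G]\cap\delta=M\cap\delta\in\delta$, and combining with $|M[G]|\leq|M|$ we obtain $|M[G]|=|M\cap\delta|=|M[G]\cap\delta|$. I expect this last clause to be the main obstacle: unlike the structural clauses it is not a soft argument but genuinely relies on $\mathbb{P}$ not collapsing (and, more than adequately, being $\delta$-cc at) $\delta$, with the self-reference clause being the secondary point requiring care.
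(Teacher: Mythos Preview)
The paper records this as a Fact without proof, so there is no argument to compare against directly. Your treatment of the three structural clauses---monotonicity and continuity of $\seq{N_\alpha[G]}$, its union being $M[G]$, and each initial segment $\seq{N_\beta[G]\mid\beta<\alpha}$ lying in $M[G]$---is correct and, as you note, needs nothing of $\mathbb{P}$.

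You are also right that the clause $M[G]\cap\delta\in\delta$ (and with it $|M[G]|=|M[G]\cap\delta|$) cannot be obtained from the hypotheses as literally stated; the paper is being slightly informal here. However, the fix you propose---adding a $\delta$-cc hypothesis on $\mathbb{P}$---is not what the paper has in mind, and your claim that the Fact is invoked only for $\delta$-cc posets is incorrect. In the proof of Lemma~\ref{lemma-iaposets}\ref{lemma-iaposets:iter} the Fact is applied with $\mathbb{P}$ merely assumed $\delta$-proper on $IA_{<\delta}$, and in the proofs of Theorem~\ref{bigthm}\ref{bigthm:savepresat} and Theorem~\ref{savemorepresat} it is used with $\mathbb{P}(\kappa)$ (respectively $\mathbb{P}(\delta)$) in the role of $\mathbb{P}$, a poset which by Lemma~\ref{pmuisnice}\ref{pmuisnice:notcc} is \emph{not} $\kappa^+$-cc (respectively $\delta^+$-cc). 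What actually rescues every application is that the Fact is only ever used \emph{below an $(M,\mathbb{P})$-master condition}: below such a condition one has $M[G]\cap V=M$, hence $M[G]\cap\delta=M\cap\delta\in\delta$ and $|M[G]|\le|M|=|M\cap\delta|=|M[G]\cap\delta|$, with no chain-condition argument needed. So the operative content of the Fact, as the paper uses it, is the preservation of the approachability \emph{sequence}; the $\mathcal{P}^*_\delta$ clause is handled externally by the master condition already in hand.
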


It is a standard fact that $IA_{<\delta}$ is stationary.
The following lemma makes clear its utility:

\begin{lemma}\label{lemma-iaposets}
Let $\delta$ be regular and uncountable. Then:
\begin{enumerate}[label={(\roman*)}]
\item\label{lemma-iaposets:cc}
If $\mathbb{P}$ is $\delta$-cc and $M\prec (H_\theta,\in,\mathbb{P})$ is an element of $\mathcal{P}^*_\delta(H_\theta)$ (i.e. if $M\cap\delta\in\delta$), then $1_\mathbb{P}$ is an $(M,\mathbb{P})$-master condition; in particular $\mathbb{P}$ is $\delta$-proper on $\mathcal{P}^*_\delta(H_\theta)$.

\item\label{lemma-iaposets:closed}
If $\mathbb{Q}$ is $<\delta$-closed then $\mathbb{Q}$ is $\delta$-proper on $IA_{<\delta}$.

\item\label{lemma-iaposets:iter}
If $\mathbb{P}$ is $\delta$-proper on $IA_{<\delta}$ and $\forces_\mathbb{P}``\dot{\mathbb{Q}}\text{ is }\delta\text{-cc}$ or $\forces_\mathbb{P}``\dot{\mathbb{Q}}\text{ is }<\delta\text{-closed}$ then $\mathbb{P}*\mathbb{\dot{Q}}$ is $\delta$-proper on $IA_{<\delta}$.
\end{enumerate}
\end{lemma}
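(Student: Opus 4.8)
Throughout write $\theta$ for a sufficiently large regular cardinal, and recall that whether a poset is (strongly) proper with respect to a given $M$, and whether it is $\delta$-proper on a given class of models, does not depend on the particular large $\theta$ chosen. For \ref{lemma-iaposets:cc}, the plan is the textbook chain-condition argument. Fix a dense $D\in M$. By elementarity $M$ contains a maximal antichain $A\subseteq D$, and since $\mathbb{P}$ is $\delta$-cc, $M$ also contains a surjection $f\colon\gamma\to A$ for some ordinal $\gamma<\delta$. As $\gamma\in M\cap\delta$ and $M\cap\delta$ is an ordinal, $\gamma\subseteq M$, and since $f\in M$ we get $A=\operatorname{ran}(f)\subseteq M$, so $A=A\cap M\subseteq D\cap M$. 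Thus $D\cap M$ is predense in all of $\mathbb{P}$, hence predense below every condition; so \emph{every} $p\in\mathbb{P}$ — in particular $1_\mathbb{P}$ — is an $(M,\mathbb{P})$-master condition, and each $p\in M\cap\mathbb{P}$ is its own master condition below itself. Since $\{M\in\mathcal{P}^*_\delta(H_\theta):M\prec(H_\theta,\in,\mathbb{P})\}$ is club, hence stationary, $\mathbb{P}$ is $\delta$-proper on $\mathcal{P}^*_\delta(H_\theta)$.

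For \ref{lemma-iaposets:closed}, fix $M\prec(H_\theta,\in,\mathbb{Q})$ with $M\in IA_{<\delta}$, witnessed by a continuous $\subseteq$-increasing chain $\vec{N}=\seq{N_\alpha\mid\alpha<\zeta}$ with $\zeta<\delta$, $\bigcup_\alpha N_\alpha=M$, and $\vec{N}\restriction\alpha\in M$ (hence $N_\alpha\in M$) for all $\alpha$; by the usual adjustment we may take $\mathbb{Q}\in N_0$. Given $p\in M\cap\mathbb{Q}$, I would recursively build a $\leq$-decreasing sequence $\seq{p_\alpha\mid\alpha<\zeta}$ in $M\cap\mathbb{Q}$ with $p_0=p$ so that $p_{\alpha+1}$ meets every dense subset of $\mathbb{Q}$ belonging to $N_\alpha$. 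The key point, and the reason internal approachability is the right hypothesis, is that the construction can be made definable from $p$, $\mathbb{Q}$, a fixed well-order of $H_\theta$, and $\vec{N}$; combined with $\vec{N}\restriction\alpha\in M$ this forces each proper initial segment $\seq{p_\beta\mid\beta<\alpha}$ to be an element of $M$, so that $<\delta$-closure of $\mathbb{Q}$ — which $M\prec(H_\theta,\in,\mathbb{Q})$ correctly recognizes — can be invoked \emph{inside} $M$ to continue (directly at limit stages, and at successor stages via an auxiliary $<\delta$-length recursion to meet the at most $|N_\alpha|<\delta$ relevant dense sets, again using $<\delta$-closure at its limits). Since $\zeta<\delta$ and $\mathbb{Q}$ is $<\delta$-closed in $V$, the full sequence has a lower bound $q\leq p$; for any dense $D\in M=\bigcup_\alpha N_\alpha$ we have $D\in N_\alpha$ for some $\alpha$, hence $p_{\alpha+1}\in D\cap M$ and $q\leq p_{\alpha+1}$, so $D\cap M$ is predense below $q$ and $q$ is an $(M,\mathbb{Q})$-master condition. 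As $IA_{<\delta}$ is stationary, the stationary set $\{M\in IA_{<\delta}:M\prec(H_\theta,\in,\mathbb{Q})\}$ witnesses the conclusion.

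For \ref{lemma-iaposets:iter}, choose $\theta$ large enough that $\mathbb{P}*\dot{\mathbb{Q}}\in H_\theta$ and large enough to witness that $\mathbb{P}$ is $\delta$-proper on $IA_{<\delta}$ via some stationary $S\subseteq IA_{<\delta}(H_\theta)$. Put $S':=\{M\in S:M\prec(H_\theta,\in,\mathbb{P},\dot{\mathbb{Q}})\}$, which is $S$ intersected with a club and so still a stationary subset of $IA_{<\delta}$, and along which $\dot{\mathbb{Q}},\mathbb{P}*\dot{\mathbb{Q}}\in M$. Fix $M\in S'$ and $(p,\dot q)\in M\cap(\mathbb{P}*\dot{\mathbb{Q}})$. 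First, by properness of $\mathbb{P}$ with respect to $M$, take $p'\leq p$ an $(M,\mathbb{P})$-master condition. Now force below $p'$ with $\mathbb{P}$, say by $G$: by Fact \ref{principle-A} (taking $\dot{\mathbb{Q}}\in N_0$ as above) $M[G]\in IA_{<\delta}$, $M[G]\prec H_\theta[G]$, $M[G]\cap\delta=M\cap\delta\in\delta$, and $\dot q^G\in M[G]\cap\dot{\mathbb{Q}}^G$; since $\dot{\mathbb{Q}}$ is forced $\delta$-cc (resp. $<\delta$-closed), applying \ref{lemma-iaposets:cc} (resp. \ref{lemma-iaposets:closed}) in $V[G]$ yields below $\dot q^G$ an $(M[G],\dot{\mathbb{Q}}^G)$-master condition — in the $\delta$-cc case $\dot q^G$ itself works. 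Let $\dot q^*$ be a name with $p'\forces_\mathbb{P}$``$\dot q^*\leq\dot q$ is an $(M[\dot{G}_\mathbb{P}],\dot{\mathbb{Q}})$-master condition''. Finally the standard two-step argument shows $(p',\dot q^*)$ is an $(M,\mathbb{P}*\dot{\mathbb{Q}})$-master condition below $(p,\dot q)$: for a dense $D\in M$, pass to the dense set $D_\mathbb{P}=\{r\in\mathbb{P}:(r,\dot s)\in D\text{ for some }\mathbb{P}\text{-name }\dot s\}\in M$ and the $\mathbb{P}$-name $\dot E_D$ for $\{s:(\exists r\in\dot{G}_\mathbb{P})\,(r,s)\in D\}$, then chase compatibilities through both coordinates using predensity of $D_\mathbb{P}\cap M$ below $p'$ and of $\dot E_D^{\dot{G}_\mathbb{P}}\cap M[\dot{G}_\mathbb{P}]$ below $\dot q^*$. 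Since $M\in S'\subseteq IA_{<\delta}$ was arbitrary, $\mathbb{P}*\dot{\mathbb{Q}}$ is $\delta$-proper on $IA_{<\delta}$.

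Parts \ref{lemma-iaposets:cc} and \ref{lemma-iaposets:closed} are essentially routine; the only genuine subtlety in \ref{lemma-iaposets:closed} is arranging the recursion to be internal enough to $M$ that $<\delta$-closure can be applied there, which is exactly what internal approachability provides. I expect the main work in \ref{lemma-iaposets:iter} to be the two-step master-condition bookkeeping — translating a dense subset of $\mathbb{P}*\dot{\mathbb{Q}}$ into a dense subset of $\mathbb{P}$ together with a $\mathbb{P}$-name for a dense subset of $\dot{\mathbb{Q}}$, and verifying that compatibility lifts through both coordinates — combined with the (standard) use of Fact \ref{principle-A} to keep $M[\dot{G}_\mathbb{P}]$ internally approachable so that \ref{lemma-iaposets:closed} may legitimately be re-applied in $V[\dot{G}_\mathbb{P}]$. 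Fixing a single $\theta$ large enough to also contain $\mathbb{P}*\dot{\mathbb{Q}}$ is what lets us avoid any separate lifting of stationarity between different $H_\theta$'s.
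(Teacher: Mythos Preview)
Your proof is correct and follows essentially the same approach as the paper. The paper's own proof is very terse: for \ref{lemma-iaposets:cc} it gives the same antichain-containment argument, for \ref{lemma-iaposets:closed} it simply cites Foreman--Magidor, and for \ref{lemma-iaposets:iter} it compresses the two-step master-condition argument into the single sentence ``By Fact~\ref{principle-A}, combined with \ref{lemma-iaposets:cc} and \ref{lemma-iaposets:closed}, $\mathbb{P}$ forces that $\dot{\mathbb{Q}}$ is proper with respect to $M[\dot G]$; hence $\mathbb{P}*\dot{\mathbb{Q}}$ is proper with respect to $M$.'' You have unpacked exactly this: the Foreman--Magidor descending-chain construction for \ref{lemma-iaposets:closed}, and the standard lifting of master conditions through a two-step iteration for \ref{lemma-iaposets:iter}, using Fact~\ref{principle-A} in the same way to keep $M[G]\in IA_{<\delta}$.
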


This is roughly Fact 2.9 out of \cite{cox_eskew_kill_sat_save_presat}. The following proof is largely reproduced from \cite{cox_eskew_kill_sat_save_presat} as well.

\begin{proof}
For part \ref{lemma-iaposets:cc}, let $A\in M$ be a maximal antichain in $\mathbb{P}$.
Since $|A|<\delta$ and $M\cap\delta\in\delta$, we have that $A\subseteq M$.
Thus $1_\mathbb{P}\forces M[\dot{G}]\cap\check{V}=M$, so $1_\mathbb{P}$ is a master condition for $M$.

Part \ref{lemma-iaposets:closed} is due to Foreman and Magidor in \cite{foreman_magidor_lcs_ctx_ch}. 

As for part \ref{lemma-iaposets:iter}, let $G$ be $\mathbb{P}$-generic over $V$.
Suppose that $M\prec (H_\theta,\in,\mathbb{P}*\dot{\mathbb{Q}})$ and $M\in IA_{<\delta}$.
By Fact \ref{principle-A}, combined with \ref{lemma-iaposets:cc} and \ref{lemma-iaposets:closed},
$\mathbb{P}$ forces that $\dot{\mathbb{Q}}$ is proper with respect to $M[\dot{G}]$.
Hence $\mathbb{P}*\dot{\mathbb{Q}}$ is proper with respect to $M$.
\end{proof}

Presaturation comes with some partial cofinality preservation:
\begin{fact}
If $\mathbb{P}$ is $\lambda$-presaturated for $\lambda$ regular then 
\[\forces_{\mathbb{P}} \cof^V(\geq\lambda)=\cof^{V[\dot{G}]}(\geq\lambda)\]
\end{fact}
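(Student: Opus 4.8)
The plan is to establish the two inclusions of the stated classes separately; one is automatic and the other is where $\lambda$-presaturation enters. For the automatic inclusion, I would observe that cofinality cannot increase under forcing: if $c\colon\cf^V(\alpha)\to\alpha$ is cofinal and lies in $V$, it remains cofinal in $V[\dot G]$, so $\cf^{V[\dot G]}(\alpha)\le\cf^V(\alpha)$ as ordinals. Hence whenever an ordinal $\alpha$ has $V[\dot G]$-cofinality $\ge\lambda$ it also has $V$-cofinality $\ge\lambda$, and this needs no hypothesis on $\mathbb{P}$.

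For the reverse inclusion I would run the standard argument that a $\lambda$-cc poset preserves cofinalities $\ge\lambda$, with the chain condition replaced by presaturation. Suppose toward a contradiction that $\cf^V(\alpha)\ge\lambda$ but some $p_0$ forces $\cf^{V[\dot G]}(\alpha)<\lambda$. After passing to a stronger condition we may fix an ordinal $\mu<\lambda$ and a name $\dot f$ such that $p_0\forces\dot f\colon\check\mu\to\check\alpha$ is cofinal. For each $\xi<\mu$ let $A_\xi$ be a maximal antichain among the conditions $\le p_0$ that decide $\dot f(\xi)$; then $A_\xi$ is predense below $p_0$. Since $\seq{A_\xi\mid\xi<\mu}$ is a family of $<\lambda$ antichains, $\lambda$-presaturation provides $q\le p_0$ with $\bigl|\{r\in A_\xi\mid r\compat q\}\bigr|<\lambda$ for every $\xi<\mu$.

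Now for each $\xi$ the conditions in $A_\xi$ compatible with $q$ remain predense below $q$, so letting $S_\xi$ be the (fewer than $\lambda$ many) values $\beta$ forced by such conditions to equal $\dot f(\xi)$, we get $q\forces\operatorname{ran}(\dot f)\subseteq\check S$ where $S:=\bigcup_{\xi<\mu}S_\xi$. As $\lambda$ is regular and $\mu<\lambda$ while each $|S_\xi|<\lambda$, we have $|S|<\lambda$; and $S\in V$, $S\subseteq\alpha$. Since $|S|<\lambda\le\cf^V(\alpha)$, $S$ is bounded in $\alpha$, so $q$ forces $\dot f$ to be bounded in $\check\alpha$, contradicting $q\le p_0$. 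Taking $\alpha=\lambda$ in this argument also shows $\lambda$ is still regular in $V[\dot G]$, so the conclusion really is a statement about cardinals and not merely about order types.

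I do not anticipate a genuine obstacle. The one point requiring care is that, unlike the $\lambda$-cc, presaturation does not bound the range of $\dot f$ below $1_{\mathbb{P}}$ — it only yields a single condition $q\le p_0$ below which the range is confined to a $V$-set of size $<\lambda$ — but that is exactly what the contradiction requires, since we have already worked below $p_0$ throughout.
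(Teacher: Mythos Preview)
The paper states this as a Fact without proof, so there is nothing to compare against; your argument is correct and is exactly the standard one. One small remark: the $A_\xi$ you build are antichains below $p_0$, not in $\mathbb{P}$, but since the definition of presaturation quantifies over all antichains (not just maximal ones) you may simply regard each $A_\xi$ as an antichain in $\mathbb{P}$, and density of the good conditions still lets you find $q\le p_0$.
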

The above fact has a partial converse. We will not make use of it, but it is another known way to argue that certain iterations of presaturated forcings are presaturated:
\begin{fact}\label{cof->presat}
If $\mathbb{P}$ is $\lambda^{+\omega}$-cc for some regular $\lambda\geq\omega_1$ and
\[\forall n\in\omega \ \forces_{\mathbb{P}} cf^{V[\dot{G}]}\left(\left(\lambda^{+n}\right)^V\right)\geq\lambda\]
then $\mathbb{P}$ is $\lambda$-presaturated.
\end{fact}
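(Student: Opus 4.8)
The plan is to prove $\mathbb{P}$ is $\lambda$-presaturated directly, working through an equivalent reformulation of presaturation in terms of names. The first step is to record the elementary equivalence: $\mathbb{P}$ is $\lambda$-presaturated if and only if for every $\theta<\lambda$, every $\mathbb{P}$-name $\dot f$ for a function $\theta\to\mathrm{Ord}$, and every $p_0\in\mathbb{P}$, there are $q\leq p_0$ and $X\in V$ with $|X|^V<\lambda$ such that $q\forces \mathrm{ran}(\dot f)\subseteq\check X$. This is the standard ``possible values'' bookkeeping: given antichains $\seq{A_\alpha\mid\alpha<\theta}$, let $\dot f_\alpha$ name the index of the unique element of $A_\alpha\cap\dot G$ when it exists and a sentinel value otherwise; then for a condition $q$ the set $\{a\in A_\alpha\mid a\compat q\}$ has size $<\lambda$ for every $\alpha$ exactly when $q$ forces $\mathrm{ran}(\dot f)$, for $\dot f=\seq{\dot f_\alpha}$, into a ground-model set of size $<\lambda$ (the $\theta<\lambda$ sentinels are absorbed since $\lambda$ is regular), and conversely one replaces a name by maximal antichains deciding its values.

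With this reformulation fixed, take $\theta<\lambda$, a name $\dot f:\theta\to\mathrm{Ord}$, and $p_0$. Using $\lambda^{+\omega}$-cc, for each $\alpha$ a maximal antichain deciding $\dot f(\alpha)$ has size $<\lambda^{+\omega}$, so there is $S_\alpha\in V$ with $|S_\alpha|<\lambda^{+\omega}$ and $\forces\dot f(\alpha)\in\check S_\alpha$; composing coordinatewise with collapsing bijections, I may assume $\dot f(\alpha)<\lambda^{+n_\alpha}$ for a fixed $n_\alpha<\omega$ (a coordinate with $|S_\alpha|<\lambda$ is already harmless). The core of the argument is a ``level lemma'': for each fixed $N<\omega$, every name $\dot g:\theta\to\lambda^{+N}$ with $\theta<\lambda$, and every $p$, there are $q\leq p$ and $X\in V$ with $|X|^V<\lambda$ and $q\forces\mathrm{ran}(\dot g)\subseteq\check X$, proved by induction on $N$. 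For $N=0$: in any generic $G\ni p$ the set $\mathrm{ran}(\dot g[G])$ has size $<\lambda=\cf^{V[G]}(\lambda)$ and so is bounded below $\lambda$ by some $\delta$; by the forcing theorem some $q\in G$ with $q\leq p$ forces $\mathrm{ran}(\dot g)\subseteq\check\delta$, and $X=\delta$ works. For $N\to N+1$: in $V[G]$ the range is bounded in $\lambda^{+(N+1)}$ by some $\gamma$ (cofinality preservation), so some $q_1\in G$, $q_1\leq p$, forces $\mathrm{ran}(\dot g)\subseteq\check\gamma$; since $|\gamma|^V\leq\lambda^{+N}$, fix in $V$ an injection $e:\gamma\to\lambda^{+N}$, apply the inductive hypothesis to $e\circ\dot g$ below $q_1$, and pull the cover back through $e$.

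This settles the case $\sup_{\alpha<\theta}n_\alpha<\omega$, since then $\dot f$ maps into a single $\lambda^{+N}$. The remaining case, which I expect to be the main obstacle, is $\theta\geq\omega$ with $\{n_\alpha\mid\alpha<\theta\}$ cofinal in $\omega$: the level lemma must be invoked at infinitely many levels, and the naive move of intersecting the resulting open dense sets $E_n=\{q\mid q\text{ bounds the level-}n\text{ part of }\dot f\text{ into a small }V\text{-set}\}$ fails, since $\bigcap_n E_n$ need not even be nonempty below $p_0$. Here one must use $\lambda^{+\omega}$-cc essentially. The target is to show that below some $q\leq p_0$ the range of $\dot f$ is already bounded in $\lambda^{+\omega}$, thereby reducing to the level lemma; equivalently, one must rule out that $p_0$ forces ``$\mathrm{ran}(\dot f)$ is a set of size $<\lambda$, unbounded in $\lambda^{+\omega}$, yet not covered by any ground-model set of size $<\lambda$''. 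Concretely, one extracts a covering-type property at the singular cardinal $\lambda^{+\omega}$ from $\lambda^{+\omega}$-cc together with the cofinality-preservation hypotheses and the fact that each $\mathrm{ran}(\dot f)\cap\lambda^{+n}$ is small; combining such a cover with the $\omega$-completeness of ``size $<\lambda$'' (valid since $\cf(\lambda)>\omega$ as $\lambda\geq\omega_1$) produces a single ground-model cover of $\mathrm{ran}(\dot f)$ of size $<\lambda$, and the forcing theorem then yields the desired $q$. Making this covering step precise is where I expect essentially all of the difficulty to lie.
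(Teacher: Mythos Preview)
The paper does not supply its own proof of this statement; it is recorded as a cited fact (Fact 2.11 of Cox--Eskew, generalizing Theorem 4.3 of Baumgartner--Taylor). So there is nothing in the paper to compare your approach against directly.

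Your reformulation of presaturation and your inductive ``level lemma'' are correct and are indeed the core of the argument. However, you explicitly leave the case $\sup_\alpha n_\alpha=\omega$ as a sketch, and that sketch does not close. Two concrete problems with what you wrote: (1) Your stated ``target'' of finding $q\leq p_0$ forcing $\mathrm{ran}(\dot f)$ to be bounded below $(\lambda^{+\omega})^V$ need not be achievable, since $\cf^{V[G]}((\lambda^{+\omega})^V)=\omega\leq\theta$ and nothing in the hypotheses prevents $\dot f[G]$ from being cofinal there. (2) The alternative you gesture at---for each $n$ use density of the level-$n$ set to obtain, in $V[G]$, a ground-model $X_n$ of size $<\lambda$ covering $\mathrm{ran}(\dot f)\cap(\lambda^{+n})^V$, then take $\bigcup_n X_n$---yields a set of $V[G]$-cardinality $<\lambda$ (via $\cf(\lambda)>\omega$), but \emph{not} a ground-model set: the sequence $\langle X_n\mid n<\omega\rangle$ is selected in $V[G]$, and nothing you have assumed lets you replace it by a sequence in $V$. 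Equivalently, you have reduced to showing that $\bigcap_n D_n$ is dense for the countably many dense open sets $D_n$ coming from the level lemma, and without some distributivity this needs an argument you have not supplied.

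Your final sentence candidly acknowledges this, so the proposal is knowingly incomplete rather than mistaken. But note that this missing step is precisely where the $\lambda^{+\omega}$-cc hypothesis must do real work: in the Baumgartner--Taylor prototype one has $\lambda^{++}$-cc, so every $n_\alpha\leq 1$ and the unbounded case simply does not arise. The genuine content of the generalization lies exactly in the case you left open.
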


This appears as Fact 2.11 in \cite{cox_eskew_kill_sat_save_presat}, which in turn is a generalization of Theorem 4.3 of \cite{baumgartner_taylor_sat_gen_ees_two}.

\begin{fact}\label{foreman_ideals_gen_ees:ultrapower-closure}
For a $\kappa$-complete, $\kappa^+$-saturated ideal $I\in V$,
if $U$ is a $\mathcal{B}_I$-generic filter over $V$
then in $V[U]$, $^\kappa \Ult(V,U)\subseteq \Ult(V,U)$; that is, $\Ult(V,U)$ is closed under $\kappa$-sequences from $V[U]$.
\end{fact}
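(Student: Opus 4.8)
The plan is to use the two hypotheses on $I$ in complementary ways: $\kappa^+$-saturation makes $\mathcal{B}_I$ a $\kappa^+$-cc poset, while normality of $I$ makes the induced generic ultrafilter $V$-normal. Fix a $\mathcal{B}_I$-generic $U$, write $j=j_U\colon V\to M:=\Ult(V,U)$, and recall that $M$ is formed from ground-model functions $f\colon\kappa\to V$, that every $j(x)$ lies in $M$, and that $\kappa=[\mathrm{id}]_U\in M$. By normality we also have $\mathrm{crit}(j)=\kappa$, $\kappa<j(\kappa)$, $[f]_U=j(f)(\kappa)$ for every $f\in({}^\kappa V)^V$, and $[X]_I\in U\iff\kappa\in j(X)$ for every $X\in\mathcal{P}(\kappa)^V$.

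Next I would fix $\seq{a_\xi\mid\xi<\kappa}\in V[U]$ with each $a_\xi\in M$, a name $\dot a$ for it, and a condition $p_0\in U$ witnessing that $\dot a$ names a length-$\kappa$ sequence of members of $\Ult(V,\dot U)$; all further choices are made below $p_0$. Working in $V$: for each $\xi<\kappa$ the set of conditions forcing $\dot a(\check\xi)=[\check f]_{\dot U}$ for some $f\in({}^\kappa V)^V$ is dense, so by the $\kappa^+$-cc I may pick a maximal antichain $\seq{p^\xi_i\mid i<\kappa}$ (padded by repetition so the length is exactly $\kappa$) and functions $f^\xi_i\in({}^\kappa V)^V$ with $p^\xi_i\forces\dot a(\check\xi)=[\check f^\xi_i]_{\dot U}$; then choose $V$-representatives $Y^\xi_i\subseteq\kappa$ with $p^\xi_i=[Y^\xi_i]_I$. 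The two arrays $\seq{\seq{Y^\xi_i\mid i<\kappa}\mid\xi<\kappa}$ and $\seq{\seq{f^\xi_i\mid i<\kappa}\mid\xi<\kappa}$ belong to $V$.

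The key point is that $M$ can reconstruct $\seq{a_\xi\mid\xi<\kappa}$ from the $j$-images of these arrays. Since $j$ fixes every ordinal below $\kappa$, restricting the images to coordinates below $\kappa$ shows that $M$ contains $\seq{\seq{j(Y^\xi_i)\mid i<\kappa}\mid\xi<\kappa}$ and $\seq{\seq{j(f^\xi_i)\mid i<\kappa}\mid\xi<\kappa}$. For each $\xi<\kappa$ let $i(\xi)$ be the least $i$ with $p^\xi_i\in U$; by $V$-normality this is the least $i$ with $\kappa\in j(Y^\xi_i)$, and --- since whether the ordinal $\kappa$ belongs to a set $j(Y^\xi_i)\in M$ is absolute between $M$ and $V[U]$, and the set of such $i$ is nonempty by maximality --- $M$ computes $i(\xi)$ correctly from the parameters above. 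Finally $a_\xi=[f^\xi_{i(\xi)}]_U=j(f^\xi_{i(\xi)})(\kappa)$, which $M$ also computes from $j(f^\xi_{i(\xi)})\in M$ and $\kappa\in M$. Thus $\xi\mapsto a_\xi$ is definable in $M$ from parameters in $M$, so $\seq{a_\xi\mid\xi<\kappa}\in M$, as required.

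The step I expect to carry the whole argument is the claim that $M$ recovers the ``selector'' $i(\xi)$, i.e.\ that enough of the generic filter is visible inside $M$; the apparent obstacle is that one keeps reducing to the statement that new functions $\kappa\to\kappa$ lie in $M$, which is an instance of what is being proved. Both hypotheses break this circularity: $V$-normality converts the non-absolute ``$[Y^\xi_i]_I\in U$'' into the absolute ``$\kappa\in j(Y^\xi_i)$'', and the $\kappa^+$-cc is what bounds each antichain by $\kappa$, so that its pointwise $j$-image restricted to coordinates below $\kappa$ is an element of $M$ rather than merely of $V[U]$. Everything else is routine bookkeeping with nice names and the representation $[f]_U=j(f)(\kappa)$.
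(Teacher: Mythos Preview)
The paper does not actually prove this statement: it is recorded as a \emph{Fact} and justified only by a citation (``This follows from Propositions~2.9 and~2.14 of \cite{foreman_ideals_gen_ees}''). So there is no paper-proof to compare against; what you have written is precisely the standard direct argument that underlies those cited propositions, and it is correct in substance.

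One small point: you invoke \emph{normality} of $I$ (to get $[\mathrm{id}]_U=\kappa$, hence $[f]_U=j(f)(\kappa)$ and $[X]_I\in U\iff\kappa\in j(X)$), but the Fact as stated assumes only $\kappa$-completeness and $\kappa^+$-saturation. This is harmless here, since every ideal the paper applies the Fact to is normal, but your argument goes through verbatim without normality if you replace the seed $\kappa$ by $[\mathrm{id}]_U$ throughout: $\kappa$-completeness alone gives $\mathrm{crit}(j)=\kappa$ (so your restriction-to-$\kappa\times\kappa$ step is unaffected), and $[\mathrm{id}]_U\in M$, so $M$ can still compute both the selector $i(\xi)$ via ``$[\mathrm{id}]_U\in j(Y^\xi_i)$'' and the value $a_\xi=j(f^\xi_{i(\xi)})([\mathrm{id}]_U)$. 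With that tweak, your sketch is a complete and self-contained proof of the Fact.
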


This follows from Propositions 2.9 and 2.14 of \cite{foreman_ideals_gen_ees}.

We will sometimes write $\Ult(V,I)$ to denote $\Ult(V,U)$, and will also write $j_I$ to denote $j_U:V\to \Ult(V,U)$.

If $I\in V$ is an ideal on $\kappa$ and $\mathbb{P}$ is a notion of forcing understood from context, then we will write $\overline{I}:=\left\{N\in \mathcal{P}^{V^\mathbb{P}}(\kappa)\mid \exists A\in I \ N\subseteq A\right\}$.

The following two simplified versions of Foreman's Duality Theorem will be useful later:

\begin{lemma}\label{foreman_ideals_gen_ees:groundideals}
For a $\kappa$-complete, $\kappa^+$ saturated $I\in V$, $\overline{I}$ is $\kappa^+$-saturated in $V^\mathbb{Q}$ if and only if $\forces_{\mathcal{B}_I} \dot{j}_I(\mathbb{Q})$ is $\kappa^+$-cc.
\end{lemma}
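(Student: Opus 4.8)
My plan is to derive this equivalence from Foreman's Duality Theorem \cite{foreman_ideals_gen_ees} together with the standard behaviour of chain conditions under two-step iterations. Recall first that, for a regular cardinal $\lambda$, a two-step iteration $\mathbb{P}_0 * \dot{\mathbb{P}}_1$ has the $\lambda$-cc if and only if $\mathbb{P}_0$ has the $\lambda$-cc and $\mathbb{P}_0$ forces $\dot{\mathbb{P}}_1$ to have the $\lambda$-cc; the nontrivial direction is proved by observing that a $\mathbb{P}_0$-generic filter can meet only $<\lambda$-many conditions of a purported size-$\lambda$ antichain of the iteration, and then using the $\lambda$-cc of $\mathbb{P}_0$ and the regularity of $\lambda$ to bound the antichain back in $V$. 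I will apply this with $\lambda=\kappa^+$ to two forcings that the Duality Theorem will identify.

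Next I would set up the Duality Theorem. We will use that $\mathbb{Q}$ — the iteration of Section \ref{the-itern} — is $\kappa$-cc and has a dense subset of size $\le\kappa$. Forcing with $\mathcal{B}_I$ produces a generic ultrafilter $U$ and the generic ultrapower $j_I:V\to M=\Ult(V,U)$, and by Fact \ref{foreman_ideals_gen_ees:ultrapower-closure} $M$ is closed under $\kappa$-sequences in $V[U]$. Since $\kappa=\mathrm{crit}(j_I)$ and every maximal antichain of $\mathbb{Q}$ in $V$ has size $<\kappa$, such an antichain lies in $M$ and is carried by $j_I$ onto a maximal antichain of $j_I(\mathbb{Q})$; likewise every subset of $\mathbb{Q}$ from $V$ lies in $M$. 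Consequently $j_I[\mathbb{Q}]\lessdot j_I(\mathbb{Q})$, and the $j_I$-pullback of any $V[U]$-generic for $j_I(\mathbb{Q})$ is a $V$-generic for $\mathbb{Q}$. In this situation the Duality Theorem yields a forcing equivalence, computed in $V$,
\[
\mathrm{RO}\bigl(\mathbb{Q} * \dot{\mathcal{B}}_{\overline{I}}\bigr) \;\cong\; \mathcal{B}_I * \dot{j}_I(\mathbb{Q}),
\]
with no residual quotient factor on the right: it is exactly the $\kappa$-cc of $\mathbb{Q}$ that collapses the general quotient $\dot{j}_I(\mathbb{Q})/\dot{j}_I[\dot{G}_{\mathbb{Q}}]$ down to all of $\dot{j}_I(\mathbb{Q})$.

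Finally I would apply the iteration lemma to each side of this equivalence. On the left, $\mathbb{Q}$ is $\kappa$-cc, hence $\kappa^+$-cc, so $\mathbb{Q} * \dot{\mathcal{B}}_{\overline{I}}$ has the $\kappa^+$-cc if and only if $\mathbb{Q}$ forces $\dot{\mathcal{B}}_{\overline{I}}$ to have the $\kappa^+$-cc, i.e. if and only if $\overline{I}$ is $\kappa^+$-saturated in $V^{\mathbb{Q}}$. On the right, $\mathcal{B}_I$ has the $\kappa^+$-cc since $I$ is $\kappa^+$-saturated, so $\mathcal{B}_I * \dot{j}_I(\mathbb{Q})$ has the $\kappa^+$-cc if and only if $\forces_{\mathcal{B}_I}\dot{j}_I(\mathbb{Q})$ has the $\kappa^+$-cc. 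Forcing-equivalent notions have the same chain conditions, so the two right-hand conditions are equivalent, which is the lemma.

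The main obstacle I anticipate is the careful invocation of the Duality Theorem. One must verify its hypotheses — precipitousness of $I$, which follows from $\kappa^+$-saturation, together with the tameness/absoluteness requirements on $\mathbb{Q}$ — and, crucially, check that the name for an embedding $\mathbb{Q}\to\dot{j}_I(\mathbb{Q})$ produced by the general statement may be taken to be $\dot{j}_I\upharpoonright\mathbb{Q}$, so that the object on the right is literally the image of $\mathbb{Q}$ under the generic ultrapower map. The $\kappa$-cc argument for $j_I[\mathbb{Q}]\lessdot j_I(\mathbb{Q})$ sketched above is what makes this reduction legitimate; without the chain condition on $\mathbb{Q}$ one would be forced to state the lemma with the quotient $\dot{j}_I(\mathbb{Q})/\dot{j}_I[\dot{G}_{\mathbb{Q}}]$ in place of $\dot{j}_I(\mathbb{Q})$, so this hypothesis is genuinely essential to the clean form.
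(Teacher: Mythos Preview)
Your argument is correct. The paper does not actually prove this lemma: it simply records it as Corollary~7.21 of \cite{foreman_ideals_gen_ees} and moves on. What you have written is essentially the standard derivation of that corollary from the Duality Theorem (stated here as Theorem~\ref{easyduality}) together with the two-step iteration lemma for chain conditions, and it is entirely sound. One small remark: the Duality Theorem as quoted in this paper assumes normality of $I$, while the lemma as stated only says ``$\kappa$-complete, $\kappa^+$-saturated''; in every application here the ideal is normal, so this discrepancy is harmless, but it is worth being aware that you are implicitly importing that hypothesis when you invoke Theorem~\ref{easyduality}.
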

 
This appears as Corollary 7.21 in \cite{foreman_ideals_gen_ees}.

\begin{theorem}
\label{easyduality}
Let $I$ be a $\kappa$-complete normal precipitous ideal in $V$ and $\mathbb{Q}$ be a $\kappa$-cc poset. 
Then $\overline{I}$ is precipitous and there is a canonical isomorphism witnessing that
\[\mathcal{B}\left(\mathbb{Q} * \mathcal{B}_{\overline{I}}\right)\cong \mathcal{B}\left(\mathcal{B}_I * \dot{j}_I\left(\mathbb{Q}\right)\right)\]
where $\mathcal{B}(\mathbb{P})$ refers to the Boolean completion of $\mathbb{P}$.
\end{theorem}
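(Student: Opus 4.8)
The plan is to obtain this by specializing Foreman's general Duality Theorem, recalling just enough of the construction of the isomorphism that it can be reused later. The whole statement flows from a single canonical correspondence between $\mathcal{B}_I * \dot{j}_I(\mathbb{Q})$-generics and $\mathbb{Q} * \mathcal{B}_{\overline{I}}$-generics, so the first step is the lemma that makes ``$\mathbb{Q}$ is $\kappa$-cc'' the right hypothesis: $\forces_{\mathcal{B}_I}$ ``$\dot{j}_I\upharpoonright\check{\mathbb{Q}}$ is a regular embedding of $\check{\mathbb{Q}}$ into $\dot{j}_I(\check{\mathbb{Q}})$''. This is immediate from the critical point: any maximal antichain $A\subseteq\mathbb{Q}$ lies in $V$ and has size $<\kappa=\mathrm{crit}(\dot{j}_I)$, so it is moved to its pointwise image $\{\dot{j}_I(a):a\in A\}$, which by elementarity is a maximal antichain of $\dot{j}_I(\check{\mathbb{Q}})$. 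In turn this gives a regular embedding of $\mathbb{Q}$ into $\mathcal{B}(\mathcal{B}_I * \dot{j}_I(\mathbb{Q}))$ --- sending $q$ to the Boolean value of ``$\dot{j}_I(\check{q})\in\dot{K}$'', where $\dot{K}$ names the $\dot{j}_I(\mathbb{Q})$-generic --- so that $\mathcal{B}(\mathcal{B}_I * \dot{j}_I(\mathbb{Q}))$ factors as $\mathbb{Q}*\dot{\mathbb{R}}$ for a $\mathbb{Q}$-name $\dot{\mathbb{R}}$ for the quotient forcing, and everything reduces to identifying $\dot{\mathbb{R}}$ with a name for $\mathcal{B}_{\overline{I}}$.

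For that I would fix $\mathbb{Q}$-generic $G$ and analyze $\mathbb{R}$ in $V[G]$: a generic $H$ for $\mathbb{R}$ over $V[G]$ is exactly a pair $U*K$ with $U$ $\mathcal{B}_I$-generic over $V$, $K$ $j_I(\mathbb{Q})$-generic over $V[U]$, and $G=\{q\in\mathbb{Q}:j_I(q)\in K\}$; then $\{j_I(q):q\in G\}\subseteq K$ and $K$ is $j_I(\mathbb{Q})$-generic over $M:=\Ult(V,U)$, so $j_I$ lifts to $\bar{j}\colon V[G]\to M[K]$ with $\bar{j}\upharpoonright V=j_I$. Set $\bar{U}:=\{A\in\mathcal{P}^{V[G]}(\kappa):\kappa\in\bar{j}(A)\}$. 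Because $\mathbb{Q}$ is $\kappa$-cc, $\overline{I}$ is $\kappa$-complete and normal in $V[G]$, and one checks that $\bar{U}$ is a $V[G]$-normal, $V[G]$-$\kappa$-complete ultrafilter extending the filter dual to $\overline{I}$ (since $\kappa\notin j_I(A)=\bar{j}(A)$ whenever $A\in I$, as $A\notin U$), and --- the main point --- that $\bar{U}$ is $\mathcal{B}_{\overline{I}}$-generic over $V[G]$. Conversely $U=\bar{U}\cap V$ and $K=\bar{j}(G)$, so $V[G][H]=V[G][\bar{U}]$; this is the forcing equivalence of $\dot{\mathbb{R}}$ with a name for $\mathcal{B}_{\overline{I}}$, hence, after Boolean completion, the claimed isomorphism, and it is canonical since only $j_I$ and the above recipe are used. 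Precipitousness of $\overline{I}$ is then free: every $\mathcal{B}_{\overline{I}}$-generic over $V[G]$ arises as such a $\bar{U}$, so its ultrapower is isomorphic to $M[K]$, a set-forcing extension of the transitive $M$, hence wellfounded.

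The two delicate points --- which I expect to be where the real work lies --- are both about genericity. First, that the derived ultrafilter $\bar{U}$ is $\mathcal{B}_{\overline{I}}$-generic over $V[G]$: this is the standard fact that the ultrafilter read off a generic elementary embedding is itself generic, and it is exactly here that normality of $\overline{I}$ (inherited from $I$ via the $\kappa$-cc hypothesis) is essential --- when $\seq{A_\xi:\xi<\theta}$ is a maximal antichain of $\mathcal{B}_{\overline{I}}$ in $V[G]$, $\kappa$ lands in a unique block $\bar{j}(\seq{A_\xi:\xi<\theta})(\eta_0)$ of the image, and one needs a pressing-down argument to see that $\eta_0$ is in the range of $\bar{j}$ (which would fail for a merely $\kappa$-complete ideal, as $\kappa$ could land in a ``late'' block). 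Second, that conversely $\bar{U}\cap V$ is $\mathcal{B}_I$-generic over $V$ for every $\mathcal{B}_{\overline{I}}$-generic $\bar{U}$ over $V[G]$ --- equivalently that $[A]_I\mapsto[A]_{\overline{I}}$ is a regular embedding of $\mathcal{B}_I^V$ into $\mathcal{B}_{\overline{I}}^{V[G]}$; this is a second, independent use of $\kappa$-cc, again routed through normality (a maximal antichain of $\mathcal{B}_I$ refines to a partition on which a Fodor-style argument in $V[G]$ shows it stays predense). I expect this last step to be the main obstacle.
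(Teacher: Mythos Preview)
The paper does not give a proof of this theorem. Immediately after the statement it simply records that the result appears as Fact~2.24 of \cite{cox_eskew_kill_sat_save_presat} and is a corollary of Theorem~7.14 of \cite{foreman_ideals_gen_ees}, and then uses it as a black box. Your proposal therefore goes well beyond what the paper itself provides: you are sketching the argument behind the cited result rather than invoking it. The outline you give (regularity of $j_I\upharpoonright\mathbb{Q}$ from the $\kappa$-cc, factoring $\mathcal{B}(\mathcal{B}_I*\dot j_I(\mathbb{Q}))$ as $\mathbb{Q}*\dot{\mathbb{R}}$, lifting the embedding over a $\mathbb{Q}$-generic, deriving an ultrafilter, and checking the two genericity directions) is the standard route to this special case of Foreman's duality, and the two places you flag as delicate are exactly where the content lies in Foreman's argument. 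There is nothing in the paper to compare against; if your aim is to match the paper's treatment, a one-line citation suffices.
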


This statement appears in \cite{cox_eskew_kill_sat_save_presat} as Fact 2.24, and is a corollary of Theorem 7.14 of \cite{foreman_ideals_gen_ees}.

\section{The Forcing Iteration}
\label{the-itern}
Through the rest of this paper, fix $\kappa$ to be a Mahlo cardinal.
We do this because by assuming that $V$ admits a $\kappa$-complete, normal, $\kappa^+$-saturated ideal on $\kappa$ concentrating on regular cardinals below $\kappa$, we have that $\kappa$ is Mahlo.

Over cardinals below $\kappa$, we will define a forcing iteration that will destroy $\kappa^+$-saturation but preserve $\kappa^+$-presaturation for ideals on $\kappa$, concentrating on regulars by adding, for each $\mu<\kappa$, $\mu$ inaccessible, a club subset $C_\mu$ of $\mu^+$ using $<\mu$-conditions.
This club $C_\mu$ will fail to contain certain ground model sets, 
in the sense that if $X\in V$ and $|X|\geq \mu$ then $X\not\subseteq C_\mu$.

Although our iteration will only be active at inaccessibles, the forcing $\mathbb{P}(\mu)$ to add $C_\mu$ is well-behaved under milder cardinal arithmetic assumptions,
so we define $\mathbb{P}(\mu)$ in this more general context:

\begin{definition}\label{defpmu}
  Let $\mu<\kappa$ be a regular cardinal such that $\lvert [\mu^+]^{<\mu} \rvert=\mu$.
  Let $\mathbb{P}(\mu)$ be the collection of all conditions $(s,f)$ such that:
  \begin{enumerate}[label={(\arabic*)}]
    \item $s\in [\mu^+\setminus \mu]^{<\mu}$\label{defpmu:s}
    \item $f:s\to [\mu^+\setminus \mu]^{<\mu}$ and if $\xi,\xi'\in s$ with $\xi<\xi'$ then $f(\xi)\subseteq\xi'$.\label{defpmu:f}
  \end{enumerate}
  We say $(s,f)\leq (t,g)$ if $s\supseteq t$ and whenever $\xi\in t$, $f(\xi)\supseteq g(\xi)$.
\end{definition}

For each $(s,f)\in \mathbb{P}(\mu)$, $s$ can be thought of as approximating $\dot{C_\mu}$, in the sense that $(s,f)\forces s\subseteq \dot{C_\mu}$
(in fact, we will later define $C_\mu=\bigcup_{(s,f)\in G} s$, for $G$ a $\mathbb{P}(\mu)$-generic filter over $V$).

Additionally, $f$ can be thought of as ``banning" certain ordinals from ever appearing in $\dot{C_\mu}$, in the sense that if $\alpha\in s$, $\beta>\alpha$, and $f(\alpha)\ni\beta$, then:
\begin{itemize}
\item it must be the case that $s\cap (\alpha,\beta]=\emptyset$. Otherwise, if $\gamma \in s\cap(\alpha,\beta]$, we would have that $\beta\in f(\alpha)$ and $\beta\notin\gamma$. Hence $f(\alpha)\not\subseteq\gamma$, contradicting conditionhood of $(s,f)$. 

\item Additionally, $(s,f)\forces \dot{C_\mu} \cap (\alpha,\beta]=\emptyset$. This is since for every $(t,g)\leq (s,f)$, $\beta\in g(\alpha)$; hence $t\cap (\alpha,\beta]=\emptyset$.

\end{itemize}

\begin{lemma}\label{pmuisnice}If $\mu$ is a regular cardinal, then $\mathbb{P}(\mu)$ has the following properties:
  \begin{enumerate}[label={(\arabic*)}]
    \item $|\mathbb{P}(\mu)|=\mu^+$ hence $\mathbb{P}(\mu)$ has the $\mu^{++}$-cc. \label{pmuisnice:cc}
    \item $\mathbb{P}(\mu)$ is $<\mu$-directed closed. \label{pmuisnice:dircl}
    \item If $\theta\geq\mu^{++}$, $M\prec (H_\theta,\in,\mu^+)$, and $M\cap \mu^+\in \mu^+\cap \cof(\mu)$, then $\mathbb{P}(\mu)$ is strongly proper for $M$. Hence $\mathbb{P}(\mu)$ preserves $\mu^+$. \label{pmuisnice:proper}
    \item If $G$ is $\mathbb{P}(\mu)$-generic over $V$, then in $V[G]$, we have that 
    \[C_\mu:=\bigcup_{(s,f)\in G} s\]
    is a club subset of $\mu^+$ such that if $X\in V$ and $|X|^V\geq \mu$, then $X\not\subseteq C_\mu$. \label{pmuisnice:club}
    \item $\mathbb{P}(\mu)$ is not $\mu^+$-cc below any condition. \label{pmuisnice:notcc}
  \end{enumerate}
\end{lemma}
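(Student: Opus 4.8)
First, items \ref{pmuisnice:cc} and \ref{pmuisnice:dircl} are bookkeeping. For \ref{pmuisnice:cc} I would bound $|\mathbb{P}(\mu)|$ above by $|[\mu^+\setminus\mu]^{<\mu}|^{<\mu}$ (choose $s$, then $f\colon s\to[\mu^+\setminus\mu]^{<\mu}$), which the cardinal arithmetic hypothesis of Definition \ref{defpmu} collapses to $\mu^+$; the reverse inequality is witnessed by the $\mu^+$ distinct conditions $(\{\alpha\},\{(\alpha,\emptyset)\})$, $\mu\le\alpha<\mu^+$. Hence $|\mathbb{P}(\mu)|=\mu^+$, and every antichain has size $\le\mu^+<\mu^{++}$. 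For \ref{pmuisnice:dircl}, given directed $D\subseteq\mathbb{P}(\mu)$ with $|D|<\mu$, take $s^\ast=\bigcup\{s:(s,f)\in D\}$ and $f^\ast(\xi)=\bigcup\{f(\xi):(s,f)\in D,\ \xi\in s\}$; regularity of $\mu$ keeps $s^\ast$ and each $f^\ast(\xi)$ of size $<\mu$, while for $\xi<\xi'$ in $s^\ast$ directedness gives a single $(u,h)\in D$ below every member of $D$ mentioning $\xi$ or $\xi'$, whence $f^\ast(\xi)\subseteq h(\xi)\subseteq\xi'$; so $(s^\ast,f^\ast)$ is a condition below all of $D$.

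The content is in \ref{pmuisnice:proper}. Write $\delta_M:=M\cap\mu^+$, so $\cf(\delta_M)=\mu$ and $\mu\subseteq M$. For $p=(s_p,f_p)\in M\cap\mathbb{P}(\mu)$ we have $f_p(\xi)\subseteq M\cap\mu^+=\delta_M$ for $\xi\in s_p$, so $q:=\big(s_p\cup\{\delta_M\},\,f_p\cup\{(\delta_M,\emptyset)\}\big)$ is a condition $\le p$, and the claim is that $q$ is an $(M,\mathbb{P}(\mu))$-strong master condition. Given $q'=(s',f')\le q$, let $\gamma^\ast=\max(s'\cap\delta_M)$ (the case $s'\cap\delta_M=\emptyset$ is easier); since $\delta_M$ is the $s'$-successor of $\gamma^\ast$ we get $f'(\gamma^\ast)\subseteq\delta_M$, and since $|s'|<\mu=\cf(\delta_M)$ all of $q'$ below $\delta_M$ lies below some $\beta^\ast\in M\cap\delta_M$. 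Set $q'_M:=\big(s'\cap\delta_M,\ f'\upharpoonright(s'\cap\delta_M)\big)$. The crucial point is that $q'_M\in M$: $s'\cap\delta_M$ is a size-$<\mu$ subset of $\beta^\ast\in M$, the cardinal arithmetic on $\mu$ makes $[\beta^\ast]^{<\mu}$ of size $\le\mu$, hence (definable from $\beta^\ast$, enumerable by an ordinal $\subseteq M$) a subset of $M$, and the analogous count bounds the number of possible $f'\upharpoonright(s'\cap\delta_M)$ by $\mu$, placing it in $M$ too. It then remains to show every $r=(s_r,f_r)\in M$ with $r\le q'_M$ is compatible with $q'$: here $s_r\cup\bigcup\operatorname{ran}(f_r)\subseteq\delta_M$ (as $r\in M$) and $s'\cap\delta_M\subseteq s_r$ with $f'(\xi)\subseteq f_r(\xi)$ on $s'\cap\delta_M$ (as $r\le q'_M$), and I would verify that $\big(s'\cup s_r,\ \xi\mapsto f'(\xi)\cup f_r(\xi)\big)$ is a condition: above $\delta_M$ only $s'$ contributes, so the order condition is inherited from $q'$; below $\delta_M$ each $\xi$ lies in $s_r$ and its $(s'\cup s_r)$-successor $\xi^+$ is $\le\delta_M$ since $\delta_M\in s'$, and either $\xi^+<\delta_M$ so $\xi^+\in s_r$ and $f_r(\xi)\subseteq\xi^+$ with $f'(\xi)\subseteq f_r(\xi)\subseteq\xi^+$ whenever $\xi\in s'$, or $\xi^+=\delta_M$ and both $f_r(\xi),f'(\xi)\subseteq\delta_M$ automatically. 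This common refinement finishes the claim. Since the $M\prec(H_\theta,\in,\mu^+)$ with $M\cap\mu^+\in\mu^+\cap\cof(\mu)$ form a stationary subset of $\mathcal{P}_{\mu^+}(H_\theta)$, $\mathbb{P}(\mu)$ has master conditions for stationarily many $M$, which rules out a name for a surjection $\mu\to\mu^+$ in the standard way, so $\mu^+$ is preserved.

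For \ref{pmuisnice:club}, unboundedness of $C_\mu$ is density of $\{(s,f):s\not\subseteq\alpha\}$, gotten by adjoining a large point with empty $f$-value. Closedness: for $\gamma<\mu^+$, the set $F_\gamma$ of $(s,f)$ such that $\gamma\in s$, or such that some $\xi\in s\cap\gamma$ has $\gamma'\in f(\xi)$ with $\gamma\le\gamma'$, is dense --- if $(s,f)\notin F_\gamma$ then $f(\xi)\subseteq\gamma$ for all $\xi\in s\cap\gamma$, so $(s\cup\{\gamma\},f\cup\{(\gamma,\emptyset)\})$ extends it into $F_\gamma$ --- and by the banning remark after Definition \ref{defpmu}, meeting $F_\gamma$ forces $\gamma\in C_\mu$ or forces $C_\mu\cap(\xi,\gamma']=\emptyset$, i.e.\ $\gamma$ is not a limit point of $C_\mu$. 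For the avoidance clause, reduce to $X\in V$, $X\subseteq\mu^+\setminus\mu$, $|X|^V=\mu$, and show $\{(s,f):\exists\beta\in X\,\exists\xi\in s\,\exists\gamma'\in f(\xi)\ (\xi<\beta\le\gamma')\}$ is dense: as $|s|<\mu=|X|$ pick $\beta\in X\setminus s$ with $\beta>\mu$, and either enlarge $f(\max(s\cap\beta))$ by $\{\beta\}$, or, if $s\cap\beta=\emptyset$, insert one point in $(\mu,\beta)$ with $f$-value $\{\beta\}$; meeting this set forces some $\beta\in X$ out of $C_\mu$. For \ref{pmuisnice:notcc}, fix $(s,f)$ and pick $\xi_0\in\mu^+\setminus\mu$ above $\sup(s)$ and $\sup\bigcup\operatorname{ran}(f)$; for $\xi_0<\beta<\mu^+$ set $p_\beta:=\big(s\cup\{\xi_0,\beta+1\},\ f\cup\{(\xi_0,\{\beta\}),(\beta+1,\emptyset)\}\big)$. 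Each $p_\beta\le(s,f)$ is a condition ($f(\xi)\subseteq\xi_0$ and $\{\beta\}\subseteq\beta+1$ by the choice of $\xi_0$), and for $\beta<\beta'$ a common refinement would need $f$-value $\supseteq\{\beta,\beta'\}$ at $\xi_0$ while carrying the point $\beta+1\le\beta'$ above $\xi_0$, which is impossible; so $\{p_\beta:\xi_0<\beta<\mu^+\}$ is an antichain of size $\mu^+$ below $(s,f)$.

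I expect the main obstacle to be \ref{pmuisnice:proper}: the delicate step is checking that the canonical reduction $q'_M$ really lies in $M$ --- this is exactly where $\cf(M\cap\mu^+)=\mu$ and the cardinal arithmetic on $\mu$ are used --- together with the careful (but elementary) verification that every $M$-extension of $q'_M$ amalgamates with $q'$; the rest is routine counting and density.
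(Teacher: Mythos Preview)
Your proof is correct and follows essentially the same approach as the paper. A few minor remarks:

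In \ref{pmuisnice:dircl}, your sentence ``directedness gives a single $(u,h)\in D$ below every member of $D$ mentioning $\xi$ or $\xi'$'' is not quite right as stated, since directedness only yields lower bounds for \emph{finite} subsets of $D$. What you actually need (and what suffices) is: for each $(s,f)\in D$ with $\xi\in s$, pick by directedness some $(u,h)\in D$ below both $(s,f)$ and a fixed condition containing $\xi'$; then $f(\xi)\subseteq h(\xi)\subseteq\xi'$, and taking the union over all such $(s,f)$ gives $f^\ast(\xi)\subseteq\xi'$.

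For \ref{pmuisnice:proper}, your argument that $q'_M\in M$ is the same in spirit as the paper's but packaged differently: the paper fixes in advance a bijection $\phi:\mu^+\to[\mu^+]^{<\mu}$ in $M$ with the property that $\phi\upharpoonright\beta$ surjects onto $[\beta]^{<\mu}$ for every $\beta$ of cofinality $\mu$, and uses this to show $[\delta_M]^{<\mu}\subseteq M$ in one step; you instead argue that $[\beta^\ast]^{<\mu}\in M$ has size $\le\mu$ and is therefore a subset of $M$. Both are standard. The paper's amalgamation of $r$ with $q'$ sets $F(\xi)=f_r(\xi)$ on $s_r$ and $F(\xi)=f'(\xi)$ elsewhere (using $f'(\xi)\subseteq f_r(\xi)$ on the overlap), whereas you take unions; either works.

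For the avoidance clause in \ref{pmuisnice:club}, your argument is slightly simpler than the paper's: you ban a single point $\beta\in X\setminus s$ from $C_\mu$, while the paper first passes to an initial segment of $X$ of order type $\mu$ and then bans the entire tail interval $(\zeta,\sup X]$. Both density arguments are correct.
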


\begin{proof}
The proofs are exactly as in Lemma 4.4 in \cite{cox_eskew_kill_sat_save_presat}.
For the sake of clarity, we will prove \ref{pmuisnice:proper} and \ref{pmuisnice:club}.

To see that \ref{pmuisnice:proper} holds, let $\theta\geq \mu^{++}$, $M\prec (H_\theta,\in,\mu^+)$, and $M\cap \mu^+\in \mu^+\cap \cof(\mu)$; suppose that $(s,f)\in \mathbb{P}(\mu)\cap M$.
Observe that $\mu^{<\mu}=\mu$ and $M\prec (H_\theta,\in,\mu^+,\mu)$.
Let $\delta=M\cap \mu^+$; since $(\mu^+)^{<\mu}=\mu^+$ as witnessed in $H_\theta$, we have that there is a bijection $\phi:\mu^+\to [\mu^+]^{<\mu}$ such that $\phi\in M$. 
Without loss of generality, we may assume that for each $\beta<\mu^+$ with $cf(\beta)=\mu$, $\phi\upharpoonright \beta$ surjects onto $[\beta]^{<\mu}$.

We wish to show that $^{<\mu} (M\cap \mu^+)\subseteq M$.
Let $\delta=M\cap \mu^+$ and suppose that $b\in [\delta]^{<\mu}$. 
Since $cf(\delta)=\mu$, we have that $\sup b < \delta$.
But then by choice of $\phi$, there is an $\alpha<\sup b$ such that $\phi(\alpha)=\beta$, and since $\sup b<\delta$, $\alpha\in M$.
Thus $b\in M$, and so we have shown
\[^{<\mu}(M\cap \mu^+)\subseteq M\]
Since $|s|<\mu\subseteq M\cap \mu^+$, we thus have that $s\subseteq M$ and hence $M\cap \mu^+\notin s=dom(f)$.
Further, if $\xi\in s$ then $f(\xi)\in M\cap [\mu^+]^{<\mu}$; since $\mu\subseteq M$ and $\theta$ is sufficiently large, $f(\xi)\subseteq M\cap\mu^+$.

Thus the following condition $(s',f')$ extends $(s,f)$:
\[(s',f'):=\left(s\frown \left(M\cap \mu^+\right),f\frown \left(M\cap \mu^+\mapsto \{M\cap \mu^+\}\right)\right)\]
We now must argue that $(s',f')$ is a strong master condition for $(M,\mathbb{P}(\mu))$. 
Let $(t,h)\leq (s',f')$. 
Then $t_M:=t\cap M$ is a $<\mu$-sized subset of $M\cap \mu^+$, hence $t_M\in M$.
Further, since $(t,h)\leq (s',f')$, we have that $M\cap\mu^+\in t$.
Hence, as $(t,h)$ is a condition in $\mathbb{P}(\mu)$ (namely, by part \ref{defpmu:f} of Definition \ref{defpmu}), $(h\upharpoonright t_M):t_M \to [M\cap \mu^+]^{<\mu}$.
Thus $(t_M,h\upharpoonright t_M)\in M\cap \mathbb{P}(\mu)$.

To complete the proof of strong properness, let $(u,g)\in M\cap\mathbb{P}(\mu)$, $(u,g)\leq (t_M,h\upharpoonright t_M)$.
Then let $F:u\cup t \to [\mu^+]^{<\mu}$, $F(\xi)=g(\xi)$ if $\xi\in u$, and $F(\xi)=h(\xi)$ otherwise.
Then $(u\cup t, F)\in\mathbb{P}(\mu)$ and $(u\cup t, F)\leq (u,g),(t,h)$. 
Since $(u,g)$ was arbitrary, we have shown that every extension of $(t_M,h\upharpoonright t_M)$ in $\mathbb{P}(\mu)\cap M$ is compatible with $(t,h)$.
Thus $(s',f')$ is a strong master condition. This completes our proof of \ref{pmuisnice:proper}.

To see that \ref{pmuisnice:club} holds, we have three things to show:
\begin{enumerate}[label={(\roman*)}]
\item $C_\mu$ is unbounded in $\mu^+$
\item $C_\mu$ is closed
\item If $X\in V$ and $|X|^V\geq\mu$ then $X\not\subseteq C_\mu$
\end{enumerate}

To see (i), let $(s,f)\in \mathbb{P}(\mu)$ and let $\alpha<\mu^+$. 
By definition of $\mathbb{P}(\mu)$, $|s|<\mu$ and for each $\beta\in s$, $f(\beta)$ is a $<\mu$-sized subset of $\mu^+$. 
Hence $\sup_{\beta\in s} \sup f(\beta)<\mu^+$, so let $\delta$ be such that $\sup_{\beta\in s} \sup f(\beta)<\delta<\mu^+$.
Then 
\[p:=(s\frown \delta, f \frown (\delta\mapsto \emptyset))\]
is a condition below $(s,f)$ such that $p\forces \delta\in \dot{C_\mu}$; thus $C_\mu$ is unbounded.

To see (ii), we argue contrapositively. Let $\beta\in \mu^+\setminus (\mu+1)$ and suppose $(s,f)\in\mathbb{P}(\mu)$ is such that $(s,f)\forces \check{\beta}\notin \dot{C_\mu}$. We will argue that $(s,f)\forces \check{\beta}\notin Lim(\dot{C_\mu})$.
Observe that there must be an $\alpha\in s\cap\beta$ such that $f(\alpha)\not\subseteq\beta$; for otherwise, we would have that for all $\alpha \in s\cap \beta$, $f(\alpha)\subseteq\beta$, hence $\left(s\frown \beta, f\frown(\beta\mapsto\emptyset)\right)$ would be a condition below $(s,f)$ forcing $\beta\in\dot{C_\mu}$.
By conditionhood of $(s,f)$, there is a unique such $\alpha$ and $\alpha$ is the largest element of $s\cap\beta$.
Additionally, no extension $(t,g)$ of $(s,f)$ can have that $t\cap (\alpha,\beta)\neq\emptyset$, and hence $(s,f)\forces``\check{\alpha}\text{ is the largest element of }\dot{C_\mu}\cap\check{\beta}"$.
Thus $(s,f)\forces\check{\beta}\notin Lim(\dot{C_\mu})$.

To see (iii), let $X\in V$ with $|X|^V\geq \mu$ and let $(s,f)\in \mathbb{P}(\mu)$. 
Observe that without loss of generality we may assume that $X\subseteq \mu^+\setminus (\mu+1)$.
Further, by taking an initial segment of $X$ we may assume that $otp(X)=\mu$ and hence that $cf(\sup(X))=\mu$.
Since $|s|<\mu$ and $\sup(X)$ has cofinality $\mu$, $s\cap \sup(X)$ is bounded below $\sup(X)$. 

Now we have two cases. If there is a $\xi\in s\cap \sup(X)$ such that $f(\xi)\not\subseteq \sup(X)$, let $\rho\in f(\xi)\setminus\sup(X)$.
Then $(s,f)\forces \dot{C_\mu}\cap (\xi,\rho]=\emptyset$ and hence $(s,f)\forces``\dot{C_\mu}\cap \check{X}\text{ is bounded below }\sup(\check{X})"$. 
Thus $X\not\subseteq C_\mu$.

Otherwise, let $\zeta=\sup\{\sup f(\xi) \mid \xi\in s\cap \sup(X)\}$.
Since each $f(\xi)\subseteq \sup(X)$ and $\mu$ is regular, $\zeta<\sup(X)$.
Let $p=(s\frown \zeta,f\frown (\zeta\mapsto\{\sup(X)\}))$.
Then $p\leq (s,f)$ and $p\forces \max(\dot{C_\mu}\cap \sup(X))=\zeta$.
Hence $p\forces X\not\subseteq \dot{C_\mu}$.
Thus $X\not\subseteq C_\mu$. This completes our proof of \ref{pmuisnice:club}.
\end{proof}

\begin{definition}
  We define an Easton support iteration forcing $\mathbb{Q}=\seq{\mathbb{Q}_\mu * \dot{\mathbb{C}}(\mu) \mid \mu<\kappa}$ as follows:
  
  For each $\mu<\kappa$, if $\mu$ is inaccessible in $V^{\mathbb{Q}_\mu}$, let $\mathbb{C}(\mu)=\mathbb{P}(\mu)$ as above, and otherwise let $\mathbb{C}(\mu)$ be the trivial forcing. 
\end{definition}

\begin{proposition}
If $\nu\leq\kappa$ is regular in $V$, then $\nu$ is still regular in $V^{\mathbb{Q}_\nu}$.\label{itn-regularity}
\end{proposition}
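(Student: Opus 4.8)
The plan is to induct on regular cardinals $\nu\le\kappa$, analyzing $\mathbb{Q}_\nu$ according to how the inaccessible cardinals of $V$ accumulate below $\nu$. The structural input is Lemma~\ref{pmuisnice}: since an ordinal that is inaccessible in a forcing extension was already inaccessible in $V$, each iterand $\dot{\mathbb{C}}(\mu)$ (for $\mu<\nu$) is nontrivial only at $V$-inaccessible $\mu$, and there --- whether it is $\mathbb{P}(\mu)$ or trivial --- it is $<\mu$-directed closed, $\mu^{++}$-cc, strongly proper, and of size $\le\mu^+$; in particular it preserves all cardinals. Using the Easton support one checks along the induction that for strongly inaccessible $\mu$ every proper initial segment $\mathbb{Q}_{\mu'}$ ($\mu'<\mu$) has size $<\mu$, the only delicate point being singular limit stages $\mu'$, where the support may be cofinal in $\mu'$ but $|\mathbb{Q}_{\mu'}|\le 2^{\mu'}<\mu$ by strong-limit-ness.

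If $\nu$ is a strong limit cardinal --- in particular if $\nu$ is itself inaccessible, and so in particular when $\nu=\kappa$ --- then every $\mathbb{Q}_{\mu}$ with $\mu<\nu$ has size $<\nu$, so the standard $\Delta$-system argument for Easton-support iterations of small forcings shows $\mathbb{Q}_\nu$ has the $\nu$-cc, and a $\nu$-cc forcing preserves the regular cardinal $\nu$. Otherwise $\nu$ is a successor cardinal or is weakly but not strongly inaccessible; either way $\nu$ is not a limit of strongly inaccessible cardinals (a regular limit of strong inaccessibles is itself strongly inaccessible), so the supremum $\rho_0$ of the strongly inaccessible cardinals below $\nu$ satisfies $\rho_0<\nu$ (set $\rho_0=0$ if there are none). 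Every stage of $\mathbb{Q}$ in $(\rho_0,\nu)$ is trivial, so $\mathbb{Q}_\nu\cong\mathbb{Q}_{\rho_0}*\dot{\mathbb{C}}(\rho_0)$. If $\rho_0$ is strongly inaccessible, the first case applied to $\rho_0$ gives that $\mathbb{Q}_{\rho_0}$ is $\rho_0$-cc of size $\rho_0<\nu$, hence preserves the regularity of $\nu$, while $\dot{\mathbb{C}}(\rho_0)$ is $\mathbb{P}(\rho_0)$ or trivial: in the former case it preserves $\nu$ by strong properness (Lemma~\ref{pmuisnice}\ref{pmuisnice:proper}) when $\nu=\rho_0^+$ and by $\rho_0^{++}$-cc (Lemma~\ref{pmuisnice}\ref{pmuisnice:cc}) when $\nu\ge\rho_0^{++}$, and in the latter case trivially. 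If instead $\rho_0$ is singular, one factors $\mathbb{Q}_{\rho_0}\cong\mathbb{Q}_\gamma*\dot{\mathbb{Q}}_{[\gamma,\rho_0)}$ for a strongly inaccessible $\gamma$ with $\mathrm{cf}(\rho_0)<\gamma<\rho_0$: then $\mathbb{Q}_\gamma$ is $\gamma$-cc of size $\gamma<\nu$, and the tail is forced to be $<\gamma$-closed by the usual closure lemma for Easton-support iterations of $<\mu$-closed forcings.

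The step I expect to be the main obstacle is exactly this last case: showing the $<\gamma$-closed tail $\dot{\mathbb{Q}}_{[\gamma,\rho_0)}$ does not collapse $\nu$ when $\rho_0<\nu\le 2^{\rho_0}$, since here a bare cardinality count is unavailable. One must use more than closure --- that the tail's iterands are strongly proper of size $\mu^+$, so the tail preserves cardinals --- together with the ambient cardinal-arithmetic hypotheses on $V$. Everything else is routine Easton-support bookkeeping: confirming that initial segments at strongly inaccessible stages carry the stated chain condition and cardinality, and that tails above a stage $\gamma$ remain $<\gamma$-closed as forcings over the relevant intermediate extension.
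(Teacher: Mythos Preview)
Your plan diverges from the paper's, and the divergence is exactly at the step you flag as the obstacle. The paper does \emph{not} try to show that a fixed tail segment of the iteration preserves $\nu$ outright. Instead it argues by contradiction: suppose $\nu$ is singularized, let $\delta<\nu$ be (any regular cardinal at least) the new cofinality, and factor $\mathbb{Q}_\nu$ as $\mathbb{Q}_\delta * \dot{\mathbb{C}}(\delta) * \dot{\mathbb{Q}}_{>\delta}$. The point is that the tail $\dot{\mathbb{Q}}_{>\delta}$ is $<\delta^+$-directed closed, so it cannot have added the cofinal $\delta$-sequence into $\nu$; hence that sequence already lives over $\mathbb{Q}_\delta * \dot{\mathbb{C}}(\delta)$, which is $\nu$-cc (taking $\delta$ inaccessible if inaccessibles are cofinal below $\nu$, and otherwise the whole iteration is small), contradiction. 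The split point is chosen \emph{after} the hypothetical cofinality drop, so closure of the tail is exactly what is needed.

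By contrast, your static factoring at $\rho_0=\sup(\text{inaccessibles}<\nu)$ forces you, in the singular-$\rho_0$ case, to show that an Easton-support tail $\dot{\mathbb{Q}}_{[\gamma,\rho_0)}$ preserves $\nu$ when $\nu=\rho_0^+\le 2^{\rho_0}$. Your suggested fix does not work as stated: strong properness of the iterands is not known to survive Easton-support (inverse-limit) iterations, so ``the tail's iterands are strongly proper, hence the tail preserves cardinals'' is not a theorem you can invoke; and there are no ``ambient cardinal-arithmetic hypotheses on $V$'' beyond $\kappa$ being Mahlo, so you cannot assume $2^{\rho_0}=\rho_0^+$. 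The gap is real. The simplest repair is to abandon the static factoring in this case and adopt the paper's dynamic one: once you assume $\mathrm{cf}(\nu)$ drops to some $\delta$, pick an inaccessible $\gamma\in(\delta,\rho_0)$ and use that the tail above $\gamma$ is $<\gamma$-closed (hence $<\delta^+$-closed), so it did not add the witnessing $\delta$-sequence.
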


\begin{proof}
This breaks into three cases:
\begin{enumerate}
\item $\nu=\tau^+$, for $\tau$ a regular cardinal
\item $\nu=\lambda^+$, for $\lambda$ a singular cardinal
\item $\nu$ is inaccessible
\end{enumerate}

If $\nu=\tau^+$ where $\tau$ is regular, we may decompose $\mathbb{Q}_\nu$ as 
\[\mathbb{Q}_\tau * \dot{\mathbb{C}}(\tau)\]
Since $\tau$ is regular, $|\mathbb{Q}_\tau|\leq\tau$ hence is $\nu$-cc. Thus $\mathbb{Q}_\tau$ preserves $\nu$.
By Lemma \ref{pmuisnice}\ref{pmuisnice:proper}, $\dot{\mathbb{C}}(\tau)$ preserves $\nu$.
Thus $\dot{\mathbb{Q}}_{\nu}$ preserves $\nu$.

If $\nu=\lambda^+$ where $\lambda$ is singular, we have that $\mathbb{Q}_\nu=\mathbb{Q}_\lambda$ since none of the ordinals in $[\lambda,\nu)$ are regular. 
Here, the situation is more complicated, since now $|\mathbb{Q}_\lambda|=\lambda^{\cf(\lambda)}\geq\nu$. So we must verify more directly that $\nu$ is preserved.

So observe that if $\nu$ is collapsed, then $V^{\mathbb{Q}_\lambda}\models |\nu|\leq |\lambda|$ and since $\lambda$ is singular, we would have a $\mathbb{Q}_\lambda$-name $\dot{f}:\check{\delta}\to\check{\nu}$ for a cofinal sequence in $\check{\nu}$ for some regular cardinal $\delta<\lambda$.

But we may decompose $\mathbb{Q}_\lambda$ into 
\[\mathbb{Q}_{\delta}*\dot{\mathbb{C}}(\delta)*\dot{\mathbb{Q}}_{>\delta^+}\]
Now, $\dot{\mathbb{Q}}_{>\delta^+}$ is $<\delta^+$-directed closed, so $\dot{\mathbb{Q}}_{>\delta}$ could not have added such an $f$.
Additionally, $\dot{\mathbb{C}}(\delta)$ satisfies the $\delta^{++}$-cc, hence is $\nu$-cc. Thus $\dot{\mathbb{C}}(\delta)$ also could not have added $f$.
Finally, $|\mathbb{Q}_\delta|=\delta$ so $\mathbb{Q}_\delta$ satisfies the $\delta^+$-cc, hence is also $\nu$-cc. Thus $\mathbb{Q}_\delta$ could not have added such an $f$ either. 

As in the successor of a regular case, $\dot{\mathbb{C}}(\nu)$ and $\dot{\mathbb{Q}}_{\geq\nu}$ preserve $\nu$ as well.

And in the case where $\nu$ is inaccessible, suppose that in $V^{\mathbb{Q}_\nu}$ that $cf(\check{\nu})=\check{\delta}<\check{\nu}$.
Then $\mathbb{Q}_\nu$ decomposes, as in the successor of a singular case, into
$$\mathbb{Q}_\delta * \dot{\mathbb{C}}(\delta) * \dot{\mathbb{Q}}_{>\delta^+}$$
The analysis is exactly as in the successor of a singular case.
\end{proof}

This shows that whenever $\nu$ is regular in $V$, $\nu$ remains regular in $V^{\mathbb{Q}_\nu}$ and we will now write $\mathbb{P}(\nu)$ rather than $\mathbb{C}(\nu)$.

\begin{corollary}
  $\mathbb{Q}$ preserves cardinals.
\end{corollary}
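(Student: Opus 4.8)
The plan is to reduce to Proposition~\ref{itn-regularity} together with the closure and chain-condition properties of the pieces of $\mathbb{Q}$ already established. First observe that it suffices to prove that every \emph{regular} cardinal of $V$ remains a cardinal in $V^{\mathbb{Q}}$: a singular cardinal $\lambda$ of $V$ is the supremum of the regular cardinals below it, so once those are all preserved, $\lambda$ is still a supremum of $V^{\mathbb{Q}}$-cardinals, hence itself a (limit) cardinal in $V^{\mathbb{Q}}$. So fix $\nu$ regular in $V$. If $\nu>\kappa$, then since $\kappa$ is inaccessible and every iterand of $\mathbb{Q}$ has size $<\kappa$, a routine bookkeeping argument gives $|\mathbb{Q}|=\kappa$, so $\mathbb{Q}$ has the $\kappa^+$-cc and preserves all cardinals $\geq\kappa^+$. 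If $\nu=\kappa$, then since the Easton-support iteration takes a direct limit at the inaccessible $\kappa$ we have $\mathbb{Q}_\kappa=\mathbb{Q}$, and Proposition~\ref{itn-regularity} applied with $\nu=\kappa$ says precisely that $\kappa$ is regular in $V^{\mathbb{Q}}$.

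The remaining case is $\nu<\kappa$. Here I would factor $\mathbb{Q}\cong\mathbb{Q}_\nu*\dot{\mathbb{C}}(\nu)*\dot{\mathbb{Q}}_{>\nu}$, where $\dot{\mathbb{Q}}_{>\nu}$ is the tail of the iteration from stage $\nu+1$ onward. By Proposition~\ref{itn-regularity}, $\nu$ is regular in $V^{\mathbb{Q}_\nu}$. Working in $V^{\mathbb{Q}_\nu}$: $\dot{\mathbb{C}}(\nu)$ is either trivial or equal to $\mathbb{P}(\nu)$, which is $<\nu$-directed closed by Lemma~\ref{pmuisnice}\ref{pmuisnice:dircl}; and $\dot{\mathbb{Q}}_{>\nu}$ is an Easton-support iteration whose iterands $\mathbb{C}(\mu)$ for $\mu>\nu$ are each $<\mu$-directed closed, so it is $<\nu^+$-directed closed. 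Hence $\dot{\mathbb{C}}(\nu)*\dot{\mathbb{Q}}_{>\nu}$ is $<\nu$-closed over $V^{\mathbb{Q}_\nu}$, so it adds no new sequences of ordinals of length $<\nu$; in particular it neither collapses $\nu$ nor changes its cofinality, and $\nu$ remains a regular cardinal in the full extension $V^{\mathbb{Q}}$. Combining the three cases shows $\mathbb{Q}$ preserves cardinals.

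The only steps requiring genuine care are the $<\nu^+$-directed closure of the tail $\dot{\mathbb{Q}}_{>\nu}$ and the size computation $|\mathbb{Q}|=\kappa$; both are standard facts about Easton-support iterations of suitably directed-closed forcings, but they are the only places where the precise support structure of $\mathbb{Q}$ is invoked — one checks, using regularity of the inaccessibles above $\nu$, that the pointwise greatest lower bound of a directed family of at most $\nu$ conditions again has Easton support.
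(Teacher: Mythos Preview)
Your argument is correct and follows essentially the same decomposition as the paper: reduce to regular $\nu$, handle $\nu\geq\kappa$ via chain condition, and for $\nu<\kappa$ factor $\mathbb{Q}$ as $\mathbb{Q}_\nu*\dot{\mathbb{C}}(\nu)*\dot{\mathbb{Q}}_{>\nu}$, applying Proposition~\ref{itn-regularity} to the first factor and closure to the tail. The only cosmetic differences are that the paper invokes Mahlo-ness of $\kappa$ to get the $\kappa$-cc of $\mathbb{Q}$ (covering $\nu\geq\kappa$ in a single stroke rather than splitting $\nu=\kappa$ and $\nu>\kappa$), and that the paper treats $\dot{\mathbb{P}}(\nu)$ separately rather than absorbing it into the $<\nu$-closed tail as you do; your packaging is arguably cleaner on that point.
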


\begin{proof}
Since $\kappa$ is Mahlo, $\mathbb{Q}=\mathbb{Q}_\kappa$  is $\kappa$-cc hence preserves $\kappa$ preserves cardinals $\geq\kappa$.

For $\nu<\kappa$ regular, we have that $\mathbb{Q}=\mathbb{Q}_\nu * \dot{\mathbb{P}}(\nu)*\dot{\mathbb{Q}}_{>\nu}$.
By the preceding proposition, $\mathbb{Q}_\nu$ preserves $\nu$. By Lemma \ref{pmuisnice}\ref{pmuisnice:proper}, $\dot{\mathbb{P}}(\nu)$ preserves $\nu$. And by Lemma \ref{pmuisnice}\ref{pmuisnice:dircl}, $\dot{\mathbb{Q}}_{>\nu}$ is $<\nu^+$-directed closed hence preserves $\nu$.
\end{proof}

\section{Destroying Saturation}
\label{killsat}
Since $\mathbb{Q}$ projects to each $\mathbb{Q}_\mu * \dot{\mathbb{P}}(\mu)$, $\mu<\kappa$ inaccessible, we may, for each such $\mu$, let $G_\mu$ be the restriction of the $\mathbb{Q}$-generic $G$ to $\mathbb{P}(\mu)$ and define 
$C_\mu=\{\xi \mid \exists (s,f)\in G_\mu \ \xi\in s\}$.
By Lemma \ref{pmuisnice}\ref{pmuisnice:club}, $C_\mu$ is a club subset of $\mu^+$ in $V^{\mathbb{Q}_{\mu}*\dot{\mathbb{P}}(\mu)}$ and for every $X\in V^{\mathbb{Q}_{\mu}}$ such that $X\subseteq [\mu,\mu^+)$ and $X$ has $V^{\mathbb{Q}_{\mu}}$-cardinality $\geq\mu$, $X\not\subseteq C_\mu$. 

As a warmup to proving Theorem \ref{bigthm}\ref{bigthm:killsat}, we argue the following:

\begin{proposition}\label{ground-killsat}
Suppose that $I\in V$ is $\kappa$-complete, normal, $\kappa^+$-saturated, and concentrates on $Reg_\kappa$. 
Then in $V^\mathbb{Q}$, $\overline{I}$ is not $\kappa^+$-saturated.
\end{proposition}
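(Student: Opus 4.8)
Here is how I would approach Proposition~\ref{ground-killsat}.

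The plan is to route everything through Foreman's Duality Theorem in the form of Lemma~\ref{foreman_ideals_gen_ees:groundideals}: that lemma reduces ``$\overline I$ is $\kappa^+$-saturated in $V^{\mathbb Q}$'' to ``$\forces_{\mathcal B_I}\dot j_I(\mathbb Q)$ is $\kappa^+$-cc'', so it suffices to produce, in every $\mathcal B_I$-generic extension $V[U]$, an antichain of $j_I(\mathbb Q)$ of size $\kappa^+$. Fix such a $U$, write $j_I\colon V\to M=\Ult(V,U)$, and recall $\mathrm{crit}(j_I)=\kappa$ and (by Fact~\ref{foreman_ideals_gen_ees:ultrapower-closure}) that $M$ is closed under $\kappa$-sequences from $V[U]$, so $M$ and $V[U]$ have the same $\mathcal P(\kappa)$ and the same $H_{\kappa^+}$.

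First I would pin down $M$'s view of $\kappa$ and of the iteration. Since $\mathrm{crit}(j_I)=\kappa$, $j_I$ fixes $V_\kappa$ pointwise, so $V_\kappa^M=V_\kappa^V$; and since the iteration up to any $\mu<\kappa$ together with the ``is stage $\mu$ active'' test are determined inside $V_\kappa$, the first $\kappa$ stages of $j_I(\mathbb Q)$ form exactly $\mathbb Q_\kappa=\mathbb Q$. Because $I$ concentrates on $Reg_\kappa$, {\L}o\'s's theorem gives $M\models$``$\kappa$ is regular'', and $V_\kappa^M=V_\kappa^V$ makes $\kappa$ a strong limit in $M$; hence $M\models$``$\kappa$ is inaccessible''.

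Next, the heart of the argument: I claim the stage-$\kappa$ iterand of $j_I(\mathbb Q)$ is $\mathbb P^{M^{\mathbb Q}}(\kappa)$, i.e.\ nontrivial. By the definition of the iteration this holds as soon as $\kappa$ is still inaccessible in $M^{\mathbb Q}$. Now ``$\mathbb Q_\mu$ forces $\check\mu$ inaccessible'' holds in $V$ for the relevant $\mu\le\kappa$ (this is how the corollary following Proposition~\ref{itn-regularity} treats $\kappa$); applying $j_I$ to the appropriate instance, and using the agreement of $M$ with $V[U]$ on $H_{\kappa^+}$ (so that $M$ correctly sees $\mathbb Q$ as a $\kappa$-cc poset of size $\kappa$), one obtains $M\models$``$\mathbb Q$ forces $\check\kappa$ inaccessible''. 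Let $G_{\mathbb Q}$ be $\mathbb Q$-generic over $V[U]$; then $\kappa$ is inaccessible in $M[G_{\mathbb Q}]$ and the stage-$\kappa$ iterand is $\mathbb P^{M[G_{\mathbb Q}]}(\kappa)$. By Lemma~\ref{pmuisnice}\ref{pmuisnice:notcc} (applied inside $M[G_{\mathbb Q}]$), $1$ forces over $\mathbb Q$ that $\mathbb P^{M[G_{\mathbb Q}]}(\kappa)$ has an antichain of size $\kappa^+$; since $\mathbb Q$ has size $\kappa$ and so is $\kappa^+$-cc, a standard extraction produces an honest antichain of $\mathbb Q*\dot{\mathbb P}(\kappa)$ of size $\kappa^+$ lying in $M$. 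Finally, the part of $j_I(\mathbb Q)$ past stage $\kappa$ is $<\kappa^+$-closed — its first nontrivial stage, if any, is a cardinal of $M$ above $\kappa^+$ — so that antichain remains an antichain of all of $j_I(\mathbb Q)$. Thus $j_I(\mathbb Q)$ is not $\kappa^+$-cc in $V[U]$, and Lemma~\ref{foreman_ideals_gen_ees:groundideals} finishes the argument.

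The step I expect to be the main obstacle is establishing that this stage-$\kappa$ factor is genuinely nontrivial, i.e.\ that $\kappa$ survives as an inaccessible after the first $\kappa$ stages \emph{as computed in $M$}. The delicacy is that only $M$, not $V[U]$ itself, is $\kappa$-closed — indeed $\kappa$ need not even be inaccessible in $V[U]$ — so one must argue, via the $H_{\kappa^+}$-agreement of $M$ with $V[U]$ together with elementarity of $j_I$, that $M$'s computation of ``$\mathbb Q_\kappa$ preserves the inaccessibility of $\kappa$'' agrees with $V$'s. Should that be awkward to set up directly at stage $\kappa$, a fallback is to note that $\mathbb Q$, having size $\kappa$, collapses no cardinal above $\kappa^+$, hence preserves in $M$ the inaccessibility of every inaccessible $\mu$ of $M$ with $\kappa<\mu<j_I(\kappa)$; such $\mu$ exist since $V\models$``$\kappa$ is Mahlo'', and the least one furnishes an iterand $\mathbb P^M(\mu)$ with antichains of size $\mu^+>\kappa^+$, again yielding the failure of the $\kappa^+$-cc and hence of $\kappa^+$-saturation.
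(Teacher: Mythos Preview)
Your approach is essentially the paper's: reduce via Lemma~\ref{foreman_ideals_gen_ees:groundideals} to showing $j_I(\mathbb{Q})$ fails the $\kappa^+$-cc, observe that $\kappa$ is inaccessible in $\Ult(V,I)$ so the stage-$\kappa$ iterand is $\mathbb{P}(\kappa)$, and invoke Lemma~\ref{pmuisnice}\ref{pmuisnice:notcc}. You are more careful than the paper about a couple of points it absorbs into Lemma~\ref{jQ-ultVI} or leaves implicit (that $\kappa$ stays inaccessible in $M^{\mathbb{Q}}$, and that the antichain lifts past stage $\kappa$), and your fallback argument via a larger $M$-inaccessible is unnecessary but sound.
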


Before we prove this, it will be helpful to isolate a lemma on what $j_I(\mathbb{Q})$ looks like in $\Ult(V,I)$:

\begin{lemma}\label{jQ-ultVI}
Let $I$ be a $\kappa$-complete, normal, fine precipitous ideal concentrating on inaccessibles.
Then in $\Ult(V,I)$, 
$j_I(\mathbb{Q})\cong \mathbb{Q}*\dot{\mathbb{R}}$, where $\dot{\mathbb{R}}$ is a name for an Easton support iteration $\seq{\mathbb{R}_\alpha * \dot{\mathbb{C}}(\alpha) \mid \alpha\in[\kappa,j(\kappa))}$, such that if $\alpha$ is inaccessible, $\mathbb{C}(\alpha)=\mathbb{P}(\alpha)$, and $\mathbb{C}(\alpha)$ is the trivial forcing otherwise.
\end{lemma}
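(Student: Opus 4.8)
The plan is to compute $j(\mathbb{Q})$ inside $M$ by elementarity and then factor the iteration at stage $\kappa$. Work in a $\mathcal{B}_I$-generic extension $V[U]$ and let $j=j_I\colon V\to M=\Ult(V,U)$ be the induced generic embedding. Since $I$ is $\kappa$-complete and normal, $\mathrm{crit}(j)=\kappa$, $j\upharpoonright V_\kappa=\mathrm{id}$, and $\kappa=[\mathrm{id}]_U$; since $I$ concentrates on inaccessibles, $Inacc_\kappa$ belongs to every $\mathcal{B}_I$-generic filter, so $\kappa\in j(Inacc_\kappa)$ and $\kappa$ is inaccessible in $M$. First I would record that $V_\kappa^M=V_\kappa^V$: ``$\subseteq$'' is clear since $j\upharpoonright V_\kappa=\mathrm{id}$ gives $V_\kappa^V\subseteq M$ with ranks preserved; for ``$\supseteq$'', any $y\in V_\kappa^M$ of $M$-rank $\gamma<\kappa$ lies in $j(V^V_{\gamma+1})$, and as $|V^V_{\gamma+1}|<\kappa$ (by inaccessibility of $\kappa$ in $V$) and $U$ is $V$-$\kappa$-complete, every $f\colon\kappa\to V^V_{\gamma+1}$ in $V$ is constant on a $U$-set, whence $j(V^V_{\gamma+1})=V^V_{\gamma+1}$. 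Consequently $H_\kappa^M=H_\kappa^V$. I would also note $\mathcal{P}(\kappa)^V\subseteq M$ (each $A\subseteq\kappa$ equals $j(A)\cap\kappa$), which together with $|\mathbb{Q}|=\kappa$ will give $\mathbb{Q}\in M$ once we know $M$ computes $\mathbb{Q}$ correctly.

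The core step is $j(\mathbb{Q})\upharpoonright\kappa=\mathbb{Q}$. By elementarity $M$ sees $j(\mathbb{Q})$ as the Easton-support iteration $\seq{j(\mathbb{Q})_\alpha * \dot{\mathbb{C}}^M(\alpha)\mid\alpha<j(\kappa)}$, with $\mathbb{C}^M(\alpha)=\mathbb{P}^M(\alpha)$ when $\alpha$ is inaccessible in $M^{j(\mathbb{Q})_\alpha}$ and trivial otherwise. I would induct on $\mu\le\kappa$ to show $j(\mathbb{Q})\upharpoonright\mu=\mathbb{Q}_\mu$. For $\mu<\kappa$: by the induction hypothesis $j(\mathbb{Q})_\mu=\mathbb{Q}_\mu$, which lies in $H_\kappa^V=H_\kappa^M$ (recall $|\mathbb{Q}_\mu|<\kappa$); the recipe defining $\mathbb{P}(\mu)$ from $\mu$ and $[\mu^+]^{<\mu}$ is the same in $V$ and in $M$; and the predicate ``$\mu$ is inaccessible in $V^{\mathbb{Q}_\mu}$'' is a property of the poset $\mathbb{Q}_\mu$ decided inside $H_\kappa$, hence absolute — so stage $\mu$ of $j(\mathbb{Q})$ coincides with stage $\mu$ of $\mathbb{Q}$, names and all. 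Limit stages follow from the support convention, and at $\mu=\kappa$ both iterations are direct limits (inaccessibility of $\kappa$ in $V$ and in $M$), computed identically. This gives $\mathbb{Q}=j(\mathbb{Q})\upharpoonright\kappa$, and in particular $\mathbb{Q}\in M$.

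It then remains to factor. Any iteration splits at any of its stages, so $j(\mathbb{Q})\cong\bigl(j(\mathbb{Q})\upharpoonright\kappa\bigr)*\dot{\mathbb{R}}=\mathbb{Q}*\dot{\mathbb{R}}$, where $\dot{\mathbb{R}}$ is the canonical $\mathbb{Q}$-name for the tail of $j(\mathbb{Q})$ above $\kappa$, evaluated in $M^{\mathbb{Q}}$. Transporting the definition of $j(\mathbb{Q})$ by elementarity, this tail is precisely the Easton-support iteration $\seq{\mathbb{R}_\alpha * \dot{\mathbb{C}}(\alpha)\mid\alpha\in[\kappa,j(\kappa))}$ with $\mathbb{C}(\alpha)=\mathbb{P}(\alpha)$ at inaccessible $\alpha$ (in the relevant intermediate extension) and trivial otherwise; in particular, whether $\mathbb{P}(\kappa)$ is actually forced at stage $\kappa$ is absorbed into $\dot{\mathbb{R}}$ and need not be decided here.

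The only genuinely non-formal point I anticipate is the verification that $M$ computes the first $\kappa$ stages of the iteration exactly as $V$ does; this rests entirely on $V_\kappa^M=V_\kappa^V$ together with the absoluteness of each small poset $\mathbb{Q}_\mu$ and of the ``inaccessible in the extension'' predicate at stages below $\kappa$, plus inaccessibility of $\kappa$ in $M$ to make the limit stage match. Granting that, the factoring and the identification of $\dot{\mathbb{R}}$ are immediate from elementarity and the standard theory of iterated forcing.
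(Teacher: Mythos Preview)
Your proposal is correct and follows the same approach as the paper, which gives only a one-line proof (``This follows from the elementarity of $j_I$, and since $I$ concentrates on regulars, $\kappa$ is regular in $\Ult(V,I)$''). You have carefully unpacked exactly the details the paper leaves implicit: the agreement $V_\kappa^M=V_\kappa^V$, the stage-by-stage identification $j(\mathbb{Q})\upharpoonright\kappa=\mathbb{Q}$, and the factoring at $\kappa$ using that $\kappa$ remains inaccessible in $M$.
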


\begin{proof}
This follows from the elementarity of $j_I$, and since $I$ concentrates on regulars, $\kappa$ is regular in $\Ult(V,I)$.
\end{proof}

And we remark on how $\Ult(V,I)$ computes $\kappa^+$:

\begin{lemma}
  Let $I$ be as in Lemma \ref{jQ-ultVI}, with $G$ a $\mathcal{B}_I$-generic over $V$.
  Then if $I$ is $\kappa^+$-saturated, then $(\kappa^+)^{\Ult(V,I)}=(\kappa^+)^{V[G]}$.
\end{lemma}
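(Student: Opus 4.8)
The plan is to leverage the $\kappa$-closure of the generic ultrapower furnished by $\kappa^+$-saturation. Write $M=\Ult(V,I)$ and $j=j_I\colon V\to M$; then $M$ is a transitive subclass of $V[G]$ with critical point $\kappa$, and as in the proof of Lemma~\ref{jQ-ultVI} the hypothesis that $I$ concentrates on (inaccessible, in particular) regular cardinals makes $\kappa$ regular --- hence a cardinal --- in $M$. Since $I$ is $\kappa$-complete and $\kappa^+$-saturated, Fact~\ref{foreman_ideals_gen_ees:ultrapower-closure} gives $^\kappa M\subseteq M$ in $V[G]$; after padding with zeros, \emph{every} sequence of ordinals from $V[G]$ of length $\le\kappa$ already lies in $M$.

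The core observation is then the equivalence: for an ordinal $\alpha$, there is a surjection $\kappa\to\alpha$ in $V[G]$ if and only if there is one in $M$. The backward direction is immediate from $M\subseteq V[G]$; the forward direction is exactly $^\kappa M\subseteq M$, since such a surjection is a $\kappa$-length sequence of ordinals. In particular $\kappa$ remains a cardinal in $V[G]$: a surjection from some $\gamma<\kappa$ onto $\kappa$ in $V[G]$ would lie in $M$, contradicting the regularity of $\kappa$ in $M$. Now, because $\kappa$ is a cardinal in both $M$ and $V[G]$, in each of these models $\kappa^+$ is precisely the least ordinal admitting no surjection from $\kappa$; by the equivalence these two least ordinals coincide, so $(\kappa^+)^{\Ult(V,I)}=(\kappa^+)^{V[G]}$.

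I do not foresee a genuine obstacle: the argument amounts to the remark that $\kappa^+$-saturation forces $M$ to compute $\mathcal{P}(\kappa)$, and hence $\kappa^+$, correctly relative to $V[G]$. The only care required is the elementary bookkeeping identifying $(\kappa^+)^N$ with ``the least ordinal with no surjection from $\kappa$'' for $N\in\{M,V[G]\}$, which is routine once $\kappa$ is known to be a cardinal in $N$; and the sole substantive input is Fact~\ref{foreman_ideals_gen_ees:ultrapower-closure}, which is precisely where $\kappa^+$-saturation (as opposed to mere precipitousness) is used.
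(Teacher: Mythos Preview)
Your argument is correct and follows essentially the same route as the paper: both hinge on Fact~\ref{foreman_ideals_gen_ees:ultrapower-closure} to transfer a surjection $\kappa\to\alpha$ from $V[G]$ into $M$, thereby showing that $M$ and $V[G]$ agree on which ordinals have size at most $\kappa$. The only cosmetic difference is that the paper first invokes $\kappa^+$-presaturation to get $(\kappa^+)^V=(\kappa^+)^{V[G]}$ and then argues the inequality $(\kappa^+)^M\geq(\kappa^+)^{V[G]}$, whereas you package the same content as a two-way equivalence and extract along the way that $\kappa$ stays a cardinal in $V[G]$; the substance is identical.
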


Thus for $\kappa^+$-saturated ideals, we will just write $\kappa^+$ to mean $(\kappa^+)^V=(\kappa^+)^{V[G]}=(\kappa^+)^{\Ult(V,I)}$.

\begin{proof}
  Since $I$ is $\kappa^+$-presaturated, forcing with $\mathcal{B}_I$ preserves $\kappa^+$, hence $(\kappa^+)^V=(\kappa^+)^{V[G]}\geq (\kappa^+)^{\Ult(V,I)}$.
  
  To see that $(\kappa^+)^{V[G]}=(\kappa^+)^{\Ult(V,I)}$, 
  suppose that $\alpha\in [\kappa,(\kappa^+)^{V[G]})$.
  Let this be witnessed by some $\kappa$-sequence $f:\kappa\to\alpha$ a bijection.
  Since $I$ is $\kappa^+$-saturated, we have from Fact \ref{foreman_ideals_gen_ees:ultrapower-closure} that $\Ult(V,I)$ is closed under $\kappa$-sequences from $V[G]$, and so $f\in \Ult(V,I)$.
  Thus $\alpha$ is not a cardinal in $\Ult(V,I)$.
\end{proof}

\begin{remark}
As with ultrapowers from a measurable cardinal, we will have that if $I$ is a $\kappa$-complete normal precipitous ideal in $V$, then in $V^{\mathcal{B}_I}$, $|j_I(\kappa)|=2^\kappa$.
However, by elementarity, in $\Ult(V,I)$, $j_I(\kappa)$ is inaccessible.
\end{remark}

\begin{remark}
This is unlike a $\lambda$-complete, $\lambda^+$-saturated ideal $J$ on $\lambda$ a successor cardinal;
for $\lambda$ a successor cardinal, we would have that $j_J(\lambda)=\lambda^+$. The argument can be found in \cite{foreman_ideals_gen_ees}.
\end{remark}

\begin{proof}[Proof of Proposition \ref{ground-killsat}]
By Lemma \ref{jQ-ultVI}, in $V^{\mathcal{B}_I}$, $\dot{j}_I(\mathbb{Q})\cong \mathbb{Q}*\dot{\mathbb{R}}$, where $\dot{\mathbb{R}}$ is an Easton support iteration $\seq{\mathbb{R}_\alpha * \dot{\mathbb{C}}(\alpha) \mid \alpha\in[\kappa,j_I(\kappa))}$ as in the lemma.

Since $I$ concentrates on inaccessibles below $\kappa$,
$\kappa$ is still inaccessible in $\Ult(V,I)$.
Thus $\mathbb{C}(\kappa)=\mathbb{P}(\kappa)$ which is not $\kappa^+$-cc. 
So $j_I(\mathbb{Q})$ is not $\kappa^+$-saturated.

So by Lemma \ref{foreman_ideals_gen_ees:groundideals}, in $V^\mathbb{Q}$, $\overline{I}$ is not $\kappa^+$-saturated.
\end{proof}

We now prove Theorem \ref{bigthm}\ref{bigthm:killsat}.

\begin{proof}[Proof of Theorem \ref{bigthm}\ref{bigthm:killsat}]
Let $G$ be $\mathbb{Q}$-generic, and suppose that in $V[G]$ there is a $\kappa$-complete, $\kappa^+$-saturated ideal $\mathcal{J}$ on $\kappa$ concentrating on regular cardinals below $\kappa$.

Let $U$ be $P(\kappa)/\mathcal{J}$-generic over $V[G]$, and let $j:V[G]\to \Ult(V[G],U)$ be the generic ultrapower.

Let $N=\bigcup_{\alpha\in ORD} j(V_\alpha)$. Then $j(\mathbb{Q})\in N$ and hence $\Ult(V[G],U)=N[g']$ for some $g'\in V[G*U]$ which is $j(\mathbb{Q})$-generic over $N$.

Observe that $\kappa$ is still inaccessible in $N[g']$ by inaccessibility in $V[G]$, by being the critical point of $j$, and since $\mathcal{J}$ concentrates on regulars.
Since $j(\kappa)>\kappa$ and $j(\kappa)$ is a cardinal in $N[g']$, $j(\kappa)> (\kappa^+)^{N[g']}\geq (\kappa^+)^{V[G]}$ (by $\kappa$-closure and $\kappa^+$-saturation of $\mathcal{J}$).
Further, by the usual ultrapower argument, $|j(\kappa)|=2^\kappa$. 

So $j(\kappa)$ is not a cardinal in $V$, but by Fact \ref{foreman_ideals_gen_ees:ultrapower-closure}, $N[g']$ is closed under $\kappa$-sequences from $V[G]$.

Work in $N[g']$. Let $g'$ be the projection of $j(\mathbb{Q})$ to $\mathbb{P}(\kappa)$, and let 
\[C_\kappa=\bigcup_{(s,f)\in g'} s\]
Then
\begin{equation}\label{eqn:ng'defc}
N[g']\models C_\kappa\text{ is club in }\kappa^+ \text{ and } \forall  X \in N |X|^N\geq\kappa, \ X \not\subseteq C_\kappa
\end{equation}

Since $V[G*U]$ is a $\kappa^+$-cc extension of $V$, we may let $D\in V$ be such that in $V[G*U]$, $D$ is a club subset of $C_\kappa$. 
Let $E\subseteq D$ be in $V$, $(o.t.(E))^V=\kappa$, $\alpha=\sup E$; since $cf(\alpha)=\kappa$, let $\phi:\kappa\to\alpha$ be a normal increasing sequence.

Let $E'=lim(E)\cap ran(\phi)$.
Then $E'\subseteq D$ and $|E'|^V=\kappa$ since $\kappa$ is inaccessible.
Further, $j(\phi)\in N$ and $j(\phi)\upharpoonright \kappa:\kappa\to j"\alpha$ is also in $N$.
Thus $ran(j(\phi)\upharpoonright \kappa)\in N$ and $j"E'\subseteq ran(j(\phi)\upharpoonright \kappa)\subseteq j"\alpha$.

But $j"E'=ran(j(\phi)\upharpoonright \kappa) \cap j(E')\in N$; and since $E'=\{\beta\in ran(\phi)\mid j(\beta)\in j(E')\}$, we have that $E'$ is a subset of $C_\kappa$ with $|E'|^N=\kappa$ and $E'\subseteq[\kappa,\kappa^+)$. 

This contradicts Statement (\ref{eqn:ng'defc}), and hence $\mathcal{J}$ cannot be $\kappa^+$-saturated.
\end{proof}

\section{Preserving Presaturation}
\label{savepresat}
We now prove Theorem \ref{bigthm}\ref{bigthm:savepresat}.

\begin{proof}[Proof of Theorem  \ref{bigthm}\ref{bigthm:savepresat}]
Let $I\in V$ be a $\kappa$-complete, normal, $\kappa^+$-saturated ideal in $V$ concentrating on inaccessibles.
Work in $V^{\mathcal{B}_I}$ and let $U$ be the generic ultrafilter. Since $I$ is $\kappa$-complete, $crit(\dot{j}_I)=\kappa$ and $\dot{j}_I\upharpoonright \kappa=id$.

Thus, in $\Ult(V,U)$, by Lemma \ref{jQ-ultVI}, $\dot{j}_I(\mathbb{Q})\cong \mathbb{Q}*\dot{\mathbb{P}(\kappa)}*\dot{\mathbb{R}}$, where $\dot{\mathbb{R}}$ is an Easton support iteration $\seq{\mathbb{R}_\alpha * \mathbb{C}(\alpha) \mid \alpha\in[\kappa^+,j_I(\kappa))}$, such that if $\alpha$ is inaccessible, $\mathbb{C}(\alpha)=\mathbb{P}(\alpha)$, and $\mathbb{C}(\alpha)$ is the trivial forcing otherwise.

We will argue that $\mathcal{B}_I * \dot{j}_I(\mathbb{Q})$ is $\kappa^+$-proper on a stationary set, and hence is $\kappa^+$-presaturated.

Observe that $\mathcal{B}_I$ is $\kappa^+$-cc. Hence, in $V[U]$ so also in $\Ult(V,U)$, $\mathbb{Q}$ is still $\kappa^+$-cc. 
Thus, in $\Ult(V,U)$, $\mathcal{B}_I * \mathbb{Q}$ is $\kappa^+$-cc and hence is $\kappa^+$-proper on $\mathcal{P}^*_{\kappa^+}(H_\theta)$ for all sufficiently large $\theta$.

The difficulty comes in assuring $\mathbb{P}(\kappa)*\dot{\mathbb{R}}$ preserves the properness on a stationary set.
We will do this by arguing that $\mathbb{P}(\kappa) * \dot{\mathbb{R}}$ is forced by $\mathcal{B}_I*\mathbb{Q}$ to be $\kappa^+$-proper on $\dot{IA_{<\kappa^+}}$.
Once we have that, since $\mathcal{B}_I*\mathbb{Q}$ is $\kappa^+$-cc and forces $\mathbb{P}(\kappa)* \dot{\mathbb{R}}$ is $\kappa^+$-proper on a stationary set, the full forcing $\mathcal{B}_I * \dot{j}_I(\mathbb{Q})$ is then $\kappa^+$-proper on a stationary set.

Work in $\Ult(V,U)^{\mathbb{Q}}$. Here, $\mathbb{P}(\kappa)$ is proper on 
$$\mathcal{S}:={\{M\prec (H_\theta,\in,\kappa^+)\mid |M|=|M\cap\kappa^+|=\kappa\text{ and }M\cap\kappa^+\in \cof(\kappa)\}}$$
and by the $<\kappa^+$-directed closedness of $\dot{\mathbb{R}}$ and Fact \ref{principle-A}, $\forces_{\mathbb{P}(\kappa)}\dot{\mathbb{R}}\text{ proper on }\check{IA}_{<\kappa^+}$.
But not only is $\mathcal{S}$ stationary, $\mathcal{S}$ is a club subset of $\mathcal{P}^*_{\kappa^+}(H_\theta) \upharpoonright \cof(\kappa)$, and hence $\mathcal{S}\cap IA_{<\kappa^+}$ is also stationary.


Thus, by Lemma \ref{lemma-iaposets}, after forcing with $\mathcal{B}_I * \dot{j}_I(\mathbb{Q})$, we have that $\mathbb{P}(\kappa)*\dot{\mathbb{R}}$ is $\kappa^+$-proper on the stationary set $\mathcal{S}\cap IA_{<\delta^+}$.

Therefore, $\mathcal{B}_I * \dot{j}_I\left(\mathbb{Q}\right)$ is $\kappa^+$-proper on a stationary subset of $\mathcal{P}_{\kappa^+}^*(H_\theta)^V$, hence is $\kappa^+$-presaturated.
But by Theorem \ref{easyduality}, $\mathcal{B}_I * \dot{j}_I\left(\mathbb{Q}\right)\cong \mathbb{Q}*\dot{\mathcal{B}_{\overline{I}}}$;
then by Lemma \ref{pushpresatdown}, $\overline{I}$ is $\kappa^+$-presaturated.
\end{proof}

A more general argument will prove Theorem \ref{savemorepresat}:

\begin{proof}[Proof of Theorem \ref{savemorepresat}]
In $V$, let $\delta\geq \kappa$ be an inaccessible cardinal, and let $I$ be a normal, precipitous, fine, $\delta^+$-presaturated ideal of uniform completeness $\kappa$ on some algebra of sets $Z$ such that $\mathcal{B}_I$ preserves the inaccessibility of $\kappa$ and $\delta$ in $\Ult(V,I)$; $\forces_{\mathcal{B}_I}\delta^+\leq |\dot{j}_I(\kappa)|<\dot{j}(\kappa)$; and $\mathcal{B}_I$ is $\delta^+$-proper on $IA_{<\delta^+}$.

We wish to show that in $V^{\mathbb{Q}}$, $\mathcal{B}_{\overline{I}}$ is not $\delta^+$-saturated, but is $\delta^+$-presaturated.

Recall that $\mathbb{Q}$ is $\kappa$-cc since $\mathbb{Q}$ is an Easton support iteration of $\kappa$-cc posets and $\kappa$ is Mahlo.
Since $\mathcal{B}_I$ is precipitous, by Theorem \ref{easyduality},
\begin{equation} \label{eqn:savemorepresat-duality}
  \mathcal{B}\left(\mathcal{B}_I * \dot{j}_I(\mathbb{Q})\right) \cong \mathcal{B}\left(\mathbb{Q} * \mathcal{B}_{\overline{I}}\right)
\end{equation}
Also, since $I$ is $\kappa$-complete, $crit(j_I)=\kappa$ and thus $j_I \upharpoonright \kappa=id$.
Since $\mathcal{B}_I$ preserves the regularity of $\kappa$, we get that $\dot{j}_I(\mathbb{Q})\upharpoonright \kappa=\mathbb{Q}$.

Therefore, $j_I(\mathbb{Q})=\mathbb{Q} * \seq{\mathbb{R}_\alpha * \mathbb{C}(\alpha) \mid \alpha\in[\kappa,j_I(\kappa))}$, where each $\mathbb{R}_\alpha$ is an Easton support iteration,
such that if $\alpha$ is inaccessible, $\mathbb{C}(\alpha)=\mathbb{P}(\alpha)$, and $\mathbb{C}(\alpha)$ is the trivial forcing otherwise.

Since $\Ult(V,I)\models ``\delta \text{ inaccessible}"$, we get that $\mathbb{C}(\delta)=\mathbb{P}(\delta)$ which is not $\delta^+$-cc.
Thus $\left(\mathbb{Q} * \mathcal{B}_{\overline{I}}\right)$ is not $\delta^+$-cc, and since $\mathbb{Q}$ is clearly $\delta^+$-cc, $\mathcal{B}_{\overline{I}}$ cannot be $\delta^+$-saturated.

As for the $\delta^+$-presaturation of $\mathcal{B}_{\overline{I}}$, by \ref{eqn:savemorepresat-duality} it suffices to show that  $\mathcal{B}_I * \dot{j}_I(\mathbb{Q})$ is $\delta^+$-presaturated.

Work in $\Ult(V,I)$.
Since $\mathcal{B}_I$ preserves the regularity of $\kappa$, we decompose $\dot{j}_I(\mathbb{Q})$ as 
\[
  \dot{j}_I(\mathbb{Q})=\mathbb{Q} * \left(\dot{j}_I(\mathbb{Q}) \upharpoonright [\kappa,\delta) \right) * \left( \dot{j}_I(\mathbb{Q})\upharpoonright [\delta,j_I(\kappa))\right)
\]
and further $\dot{j}_I(\mathbb{Q})(\delta)=\dot{\mathbb{P}}(\delta)$, so we decompose $\left( \dot{j}_I(\mathbb{Q})\upharpoonright [\delta,j_I(\kappa))\right)$ as $\left( \dot{\mathbb{P}}(\delta) * \dot{j}_I(\mathbb{Q})\upharpoonright [\delta^+,j_I(\kappa))\right)$.

So we may further decompose $\dot{j}_I(\mathbb{Q})$ as
\[
  \dot{j}_I(\mathbb{Q})=\mathbb{Q} * \left(\dot{j}_I(\mathbb{Q}) \upharpoonright [\kappa,\delta) \right) *
  \dot{\mathbb{P}}(\delta) * \left( \dot{j}_I(\mathbb{Q})\upharpoonright [\delta^+,j_I(\kappa))\right)
\]
and we will argue the following items in $\Ult(V,I)$:
\begin{itemize}
  \item $\mathbb{Q}$ is $\delta^+$-cc
  \item $\dot{j}_I(\mathbb{Q}) \upharpoonright [\kappa,\delta)$ is $\delta^+$-cc
  \item $\dot{\mathbb{P}}(\delta)$ is $\delta^+$-proper on a stationary set $S$ such that $S \cap IA_{<\delta^+}$ is stationary
  \item $\left( \dot{j}_I(\mathbb{Q})\upharpoonright [\delta^+,j_I(\kappa))\right)$ is $\delta^+$-directed closed and thus is $\delta$-proper on $IA_{<\delta^+}$
\end{itemize}
in such a way that we may conclude that $\mathcal{B}_I * \dot{j}_I(\mathbb{Q})$ is $\delta^+$-proper on a stationary set.

We have that $\mathbb{Q}$ is $\kappa$-cc, hence is $\delta^+$-cc.

If $\delta=\kappa$, then $\left(\dot{j}_I(\mathbb{Q}) \upharpoonright [\kappa,\delta) \right)$ is trivial so is $\delta^+$-cc.
Otherwise, $\delta>\kappa$, and since $\delta$ is inaccessible in $\Ult(V,I)$,
$\left(\dot{j}_I(\mathbb{Q}) \upharpoonright [\kappa,\delta) \right)$ is a $\delta$-length direct limit iteration of posets of size $<\delta$.
Thus $\left(\dot{j}_I(\mathbb{Q}) \upharpoonright [\kappa,\delta) \right)$ has size $\delta$, and so is $\delta^+$-cc.

Therefore $\mathcal{B}_I * \mathbb Q * \left(\dot{j}_I(\mathbb{Q}) \upharpoonright [\kappa,\delta) \right)$ is $\delta^+$-cc, so by Lemma \label{iaposets}\label{iaposets:iter}, is $\delta^+$-proper on $\mathcal{P}_{\delta^+}^*(H_\theta)$.

As in the proof of Theorem \ref{bigthm}\ref{bigthm:savepresat}, $\mathbb{P}(\delta)$ is proper on a club subset of $\mathcal{P}_{\delta^+}^*(H_\theta) \cap \cof(\delta)$ and so $\mathcal{B}_I * \mathbb Q * \left(\dot{j}_I(\mathbb{Q}) \upharpoonright [\kappa^+,\delta) \right) * \mathbb{P}(\delta)$ is $\delta^+$-proper on the stationary set $IA_{<\delta^+} \cap \cof(\delta)$.
Finally, $\left( \dot{j}_I(\mathbb{Q})\upharpoonright [\delta^+,j_I(\kappa))\right)$ is $\delta^+$-directed closed and therefore is proper on $IA_{<\delta^+}$, which by Fact \ref{principle-A}, is absolute between $V^{\mathcal{B}_I}$ and $V^{\mathcal{B}_I * \mathbb Q * \left(\dot{j}_I(\mathbb{Q}) \upharpoonright [\kappa^+,\delta) \right) * \mathbb{P}(\delta)}$.

Thus $\mathcal{B}_I * \dot{j}_I(\mathbb{Q})$ is $\delta^+$-proper on the stationary subset $IA_{<\delta^+}\cap \cof(\delta)$, and therefore is $\delta^+$-presaturated.
But then $\mathbb{Q} * \mathcal{B}_{\overline{I}}$ is $\delta^+$-presaturated as well, and therefore by Lemma \ref{pushpresatdown}, $V^{\mathbb{Q}}\models ``\mathcal{B}_{\overline{I}}\text{ is }\delta^+\text{-presaturated}"$.
\end{proof}

\section{Conclusions and Questions}
\label{open-qs}
We thus have that in $V^{\mathbb{Q}}$, certain $\kappa^+$-saturated ideals on $\kappa$ in $V$ are no longer $\kappa^+$-saturated, but remain $\kappa^+$-presaturated. 
As a consequence, we have counterexamples to Question \ref{foremanq13} at inaccessible cardinals.

Using Fact \ref{cof->presat}, Cox and Eskew argued in \cite{cox_eskew_kill_sat_save_presat} that their forcing $\mathbb{P}$ preserved the $\kappa^+$-presaturation of a much larger class of ideals on $\kappa$; this was possible because in their context, $j_I(\mathbb{P})$ was $\delta^{+\omega}$-cc. This naturally leads to the following question

\begin{question}
Does $\mathbb{Q}$ preserve the $\delta$-presaturation of all $\delta$-presaturated ideals on $\kappa$ concentrating on regular cardinals?
\end{question}

However, for us, $j_I(\mathbb{Q})$ will not be $\delta^{+\omega}$-cc, so Fact \ref{cof->presat} does not apply. 
This is why we only show that ideals that are $\delta$-proper on $IA_{<\delta}$ remain $\delta$-presaturated; $\delta$-saturated ideals are $\delta$-proper on $IA_{<\delta}$, so this was sufficient for our purposes.
We would need more powerful tools to argue that all $\kappa^+$-presaturated ideals in $V$ remain $\kappa^+$-presaturated in $V^{\mathbb{Q}}$.

Perhaps the more significant open questions are:

\begin{question}
Does $\mathbb{Q}$ preserve the $\delta$-presaturation of all $\delta$-presaturated ideals on $\kappa$?
\end{question}

The difficulty in this question, beyond the issues already raised, lies in the fact that here may be $\kappa^+$-saturated ideals at an inaccessible concentrating on $\cof(\mu)$, for some $\mu<\kappa$; 
in particular, as per \cite{gitik_chg_cofs_ns}, it is consistent that this is true for $NS_\kappa \upharpoonright S$, where $S$ is some stationary subset of $\kappa \cap \cof(\mu)$.

Forcing with such an ideal $I$ singularizes $\kappa$, and hence the following complications arise: 

\begin{enumerate}
  \item $j_I(\mathbb{Q})\upharpoonright \kappa$ will be an inverse limit, so it is not clear that $j_I(\mathbb{Q})$ will be proper on a stationary subset of $\mathcal{P}^*_\kappa(\kappa^+)$ as would be required to mimic the above arguments.

  \item $j_I(\mathbb{Q})(\kappa)$ will be trivial, so the iteration would need to be modified to be active at singular stages.
\end{enumerate}

The second complication motivates the following questions:

\begin{question}
Is there a forcing that preserves the above presaturation but additionally forces there to be no $\delta$-saturated ideals at all?
\end{question}

Doing so may be feasible if $j_I(\mathbb{Q})(\kappa)$ is nontrivial and adds a club as in Definition \ref{defpmu}:

\begin{question}
Let $\lambda$ be singular and assume GCH. 
Is there a cardinal preserving, $\lambda^+$-proper on a stationary set (or even just $\lambda^+$-presaturated) poset $\mathbb{P}s(\lambda)$ that adds a club subset $C$ of $\lambda^+$ such that whenever $X\in V$, $|X|^V\geq\lambda$, we have that $X\not\subseteq C$?
\end{question}

Attempting to tackle Questions 6.2 warrants using more general versions of Foreman's Duality Theorem that apply even to non-$\kappa$-cc forcings; these more powerful formulations can be found in \cite{foreman_ideals_gen_ees}.

\section{Acknowledgments}
\label{acknowledgments}
The author would like to thank Dima Sinapova, his thesis advisor, for many helpful discussions, and Sean Cox and Monroe Eskew for guidance and focus on extending their results.

It is not clear whether generalizations of Theorems \ref{bigthm}\ref{bigthm:killsat}, \ref{bigthm}\ref{bigthm:savepresat}, and \ref{savemorepresat} apply for ideals not concentrating on regulars; the author wishes to thank the referee for pointing out this problem in earlier versions of the paper, among other corrections.

\bibliographystyle{plain}
\bibliography{main}
\end{document}